\documentclass[11pt,letterpaper]{amsart} 
\usepackage{amssymb,latexsym,amsmath}
\usepackage[mathscr]{eucal}
\usepackage[margin=1in]{geometry}
\usepackage{enumitem}

\AtBeginDocument{%
   \def\MR#1{}
}

\newtheorem{theorem}{Theorem}[section]
\newtheorem{lemma}[theorem]{Lemma}
\newtheorem{proposition}[theorem]{Proposition}

\theoremstyle{definition}
\newtheorem{remark}[theorem]{Remark}

\numberwithin{equation}{section}

\newcommand{\RR}{\ensuremath{\mathbb{R}}}

\newcommand{\prtl}{\ensuremath{\partial}}
\newcommand{\veps}{\ensuremath{\varepsilon}}
\newcommand{\Ocal}{\ensuremath{\mathcal{O}}}
\newcommand{\U}{\ensuremath{\mathcal{U}}}
\newcommand{\V}{\ensuremath{\mathcal{V}}}
\newcommand{\h}{\hbar}
\newcommand{\Lsc}{\mathscr{L}}
\newcommand{\Msc}{\mathscr{M}}

\newcommand{\supp}{{\text{\rm supp}}}

\renewcommand{\Re}{\text{\rm Re}\,}
\renewcommand{\Im}{\text{\rm Im}\,}

\renewcommand{\d}{d}

\title[The Van Vleck Formula on Ehrenfest time scales]{The Van Vleck Formula on Ehrenfest time scales and stationary phase asymptotics for frequency-dependent phases}

\author[M. D. Blair]{Matthew D. Blair}
\address{Department of Mathematics and Statistics, University of New Mexico, Albuquerque, NM, USA}
\email{blair@math.unm.edu}

\begin{document}
\begin{abstract} 
The Van Vleck formula is a semiclassical approximation to the integral kernel of the propagator associated to a time-dependent Schr\"odinger equation.  Under suitable hypotheses, we present a rigorous treatment of this approximation which is valid on \emph{Ehrenfest time scales}, i.e. $\h$-dependent time intervals which most commonly take the form $|t| \leq c|\log\h|$.  Our derivation is based on an approximation to the integral kernel often called the \emph{Herman-Kluk approximation}, which realizes the kernel as an integral superposition of Gaussians parameterized by points in phase space.  As was shown by Robert  \cite{RobertHK}, this yields effective approximations over  Ehrenfest time intervals.  In order to derive the Van Vleck approximation from the Herman-Kluk approximation, we are led to develop stationary phase asymptotics where the phase functions depend on the frequency parameter in a nontrivial way, a result which may be of independent interest.
\end{abstract}
\maketitle

\section{Introduction}\label{S:intro}
Let $\hat{H}(t)$ be a self-adjoint quantum Hamiltonian defined by $\h$-Weyl quantization of a real-valued, $C^\infty$ symbol $H(t,\cdot) $ on $T^*\RR^d \cong \RR^{2d}$ (cf. \cite{RobertSemiClassique}, \cite{ZworskiSemiclassicalAnalysis}), and consider the time-dependent Schr\"odinger equation
\begin{equation}\label{schrodIVP}
	i\h \frac{\prtl\psi }{\prtl t}(t) = \hat{H}(t)\psi(t), \quad \psi(t)|_{t=0} = \psi_0 \in L^2(\RR^d).
\end{equation}
As long as $\hat{H}(t)$ is reasonably well-behaved, there exists a family of unitary operators $\U_t$ on $L^2(\RR^d)$ such that $\psi(t) = \U_t\psi_0$ gives the unique solution to \eqref{schrodIVP}.  If $\hat H$ is independent of $t$, then $\U_t = e^{-\frac {it}\h \hat H}$ as a semigroup. Otherwise, one can also consider unitary operators $\U_{t,s}$ generating solutions to the initial value problem at some other time $s$, so that $\U_{s,s}\psi_0 = \psi_0$, but here we restrict attention to the former without any loss of generality.

In what follows, we use $z$ to denote variables in $\RR^{2d} \cong T^*\RR^d$ (the cotangent bundle of $\RR^d$) and often decompose such elements as $z=(q,p) \in \RR^d \times \RR^d$ so that $q,p$ give position and momentum variables respectively. Let $H_0(t,z):\RR \times\RR^{2d} \to \RR$ denote the principal symbol of $H(t,z)$.  The Hamiltonian vector field of $H_0$ generates a flow defined by the integral curves of
\begin{equation}\label{HamEqn}
	\dot q_t = \frac{\prtl H_0}{\prtl p}(t,q_t,p_t), \quad \dot p_t =-\frac{\prtl H_0}{\prtl q}(t,q_t,p_t), \quad (q_t,p_t)|_{t=0} = (q,p).
\end{equation}
Throughout this work, we assume the corresponding flow on $\RR^{2d}$ is complete in that the maximal domain of every integral curve is all of $\RR$. We denote the flow by $\kappa_t(q,p) = (q_t(q,p),p_t(q,p))$, so that the left refers to the flow on $\RR^{2d}$ and the right gives expresses its position and momentum components.  We let $S$ denote the associated classical action $S: \RR \times \RR^{2d} \to \RR$ given by
\begin{equation}\label{actionintro}
	S(t,q,p) = \int_0^t p_s \cdot \dot{q}_s  - H_0\big(s,q_s(q,p) ,p_s(q,p) \big) \,ds .
\end{equation}

The Van Vleck formula is a semiclassical approximation to the distributional kernel of $\U_t$, denoted here as $K_t(x,y)$, in terms of the classical paths joining $y$ to $x$, i.e. solutions to \eqref{HamEqn} such that $q_t(y,p) =x$. To this end, it is typically assumed that
\begin{equation}\label{nonsinghypintro}
	\frac{\prtl q_t}{\prtl p}(y,p)\Big|_{p=\eta} \text{ is nonsingular whenever } \eta \in \Xi_t (x,y):= \{p \in \RR^d: x=q_t(y,p)\}.
\end{equation}
When $\Xi_t(x,y) \neq \emptyset$, the Van Vleck formula roughly states that to leading order in $\h$,
\begin{equation}\label{VVleadingorder}
	K_t(x,y) \sim 
	\frac{1}{(2\pi \h)^{\frac d2}}\sum_{ \eta \in \Xi_t (x,y)} e^{\frac i\h S_\eta-i\frac{\pi}{2} \theta_\eta} \Big|\det \frac{\prtl^2 S_\eta}{\prtl x \prtl y}\Big|^{1/2} \quad \text{ as }\h\to 0.
\end{equation}
Here $S_\eta$ denotes $S(t,y,\eta)$, the classical action along the path from $(y,\eta)$ and $\theta_\eta \in \frac 12 \mathbb{Z}$ denotes the (half-integer) Maslov index\footnote{Here we treat the Maslov index in a manner consistent with \cite{MeinrenkenGutzwillerTrace}, allowing it to take on a half-integer value.  This means the expression here differs slightly from that in some other treatments, where the first factor is $(2\pi i\h)^{-\frac d2}$ instead of $(2\pi \h)^{-\frac d2}$ as we have here.  This convention also agrees with the one in \cite{RobbinSalamonPaths}, where half-integer indices naturally appear for paths starting from the Maslov cycle.} of this path.  Note that  $\frac{\prtl^2 S_{\eta}}{\prtl x \prtl y}$ is well-defined since \eqref{nonsinghypintro} implies that $x=q_t(y,p)$ locally defines $p$ as a function of $(x,y)$ near any $\eta \in \Xi_t(x,y)$ (cf. \S\ref{SS:SPandVVcon} below).

For times $t$ sufficiently close to zero, the approximation \eqref{VVleadingorder} can often be rigorously justified by the WKB approach: this is essentially done in the original work of Van Vleck \cite{VanVleckCorrespondence}.  For larger times in a finite interval $t \in [-T,T]$, obtaining approximations to $K_t(x,y)$ is considerably more involved.  The methods of microlocal analysis furnish oscillatory integrals which approximate $K_t$, at which point \eqref{VVleadingorder} results from an application of the principle of stationary phase. We are aware of a few effective methods to this end:
\begin{itemize}	
	\item \textbf{The canonical operators of Maslov and the global theory of Fourier integral operators}. See the work of Maslov and Fedoriuk \cite[Theorem 12.5]{MaslovFedoriuk} and Meinrenken \cite[p.293]{MeinrenkenGutzwillerTrace} for these respective approaches, which are closely related.  The latter is rooted in work of H\"ormander and Duistermaat, but both approaches essentially develop approximations to $K_t(x,y)$ as Lagrangian distributions, i.e. distributions defined by sums of oscillatory integrals whose wave front sets are contained in the graph of $\kappa_t$ (and the phases taken here are typically real-valued).

	\item \textbf{Fourier integral operators with complex phase}. The work of Laptev and Sigal \cite[Theorem 5.1]{LaptevSigal} shows that by considering Fourier integral operators with complex phase, one can obtain an effective approximation to $K_t(x,y)$ by considering a single oscillatory integral over phase space.  In contrast, the canonical operator and global FIO methods referenced above relied on real phases, which typically realizes the kernel as a sum of oscillatory integrals, which can result in greater technical expense.
	\item \textbf{Coherent state/wave packet methods}.  The coherent state approach of Bily and Robert \cite[Theorem 3.5]{BilyRobert} begins with the results of Combescure and Robert \cite{CombescureRobertWavePackets} (see also \cite{HagedornJoye}) giving accurate approximations to the evolution of a Gaussian under $\U_t$. The kernel $K_t(x,y)$ is then realized as an integral superposition of such Gaussians.  This essentially results in a Fourier integral operator with complex phase, and as in \cite{LaptevSigal}, the approximation involves a single oscillatory integral.  The work \cite{BilyRobert} is particularly influential in the present one; as noted below, the approach taken here is a variation on theirs.
\end{itemize}

The purpose of work is to rigorously investigate the validity of the Van Vleck approximation on $\h$-dependent time scales, namely those which are comparable to the so-called \emph{Ehrenfest time}.  We leave this as a loosely defined term, but nonetheless motivate it by considering the evolution of a Gaussian wave packet.  If the classical Hamiltonian $H_0$ above is well-behaved, then typically the first derivatives of the corresponding flow $\kappa_t$ at least satisfy an exponential upper bound of the form $|\prtl \kappa_t(z)| \lesssim e^{\Lambda|t|}$.  If the flow $\kappa_t$ is chaotic, then such a bound expects to be saturated in some sense, i.e. Hamiltonian rays may diverge at that rate, or at least close to it.  This in turn limits the time scales over which the evolution of a quantum mechanical wave packet expects to be governed by quantities determined by a single classical trajectory.
For example, one typically thinks of a Gaussian  $(\pi\h)^{-\frac d2} e^{\frac i\h (p_0\cdot (q-q_0)+\frac i2 |q-q_0|^2)}$ as concentrating its mass within a region $|q-q_0| \lesssim \h^{\frac 12}$ in position and $|p-p_0| \lesssim \h^{\frac 12}$ in Fourier space (defined by the $\h$-semiclassical Fourier transform).  Thus if rays are diverging at the rate of $e^{\Lambda |t|}$,  the image of the Gaussian under $\U_t$ expects to be concentrated in a region $|q-q_t| \lesssim e^{\Lambda |t|}\h^{\frac 12}$, $|p-p_t| \lesssim e^{\Lambda |t|}\h^{\frac 12}$ where $(q_t,p_t)$ solves \eqref{HamEqn} with $(q_t,p_t)|_{t=0} = (q_0,p_0)$.  So if $|t| \gg |\log \h|$, the image of the Gaussian under $\U_t$ may no longer be concentrated in regions near $(q_t,p_t)$ which shrink as $\h\to0$.  See the introduction of Schubert, Vallejos, and Toscano \cite{SchubertValTos} for a nice description of this phenomena involving the Wigner function.  Here the ``Ehrenfest time'' is thus considered to be a fraction of $|\log\h|$ and represents the largest time scales over which a single trajectory expects to govern wave propagation.  While we have motivated these time scales using wave packets, similar delocalization phenomenon may affect the validity of all the approaches above. We also note that if the flow satisfies a much more stable bound of $|\prtl \kappa_t(z)| \lesssim (1+|t|)^k$, then the Ehrenfest time may be much longer, say $|t| \ll \h^{-\frac 1{2k} +\veps}$.

In the case where $\hat H$ is independent of $t$ and has discrete spectrum, one can of course consider $L^2$ solutions to the stationary equation $E \psi = \hat H \psi$.  In such cases, it is interesting to study the operators of the form $g(\h^{-1}(E-\hat H))$ for some Schwartz class function $g$.  The evolution operators $\U_t$ give a means of analyzing them through the operator-valued Fourier integral
\begin{equation}\label{fourierres}
	g\Big( \frac{E-\hat H}{\h} \Big) =  \frac{1}{2\pi} \int e^{\frac{itE}{\h}} e^{-\frac{it}{\h}\hat H} \hat{g}(t) \,dt = \frac{1}{2\pi} \int e^{\frac{itE}{\h}} \U_t \,\hat{g}(t) \,dt.
\end{equation}
To make use of this identity, it is typically assumed that $\hat{g}$ is compactly supported.  The Van Vleck formula thus gives a lens in to the trace or even the distributional kernel of such operators.  Indeed, since there are uniform approximations to the kernel $K_t(x,y)$ of $\U_t$ on bounded time intervals, these can be inserted into the integral expression above to analyze the kernel of $g(\h^{-1}(E-\hat H))$.  This type of approach was originally suggested by Gutzwiller \cite{gutzwiller1971periodic} in his treatment of the celebrated semiclassical trace formula which often bears his name.  The aforementioned work \cite{MeinrenkenGutzwillerTrace} rigorously shows the trace formula in this manner.  Loosely speaking, the Gutzwiller trace formula yields asymptotics on the trace of $g(\h^{-1}(E-\hat H))$ in terms of the periodic orbits generated by the Hamiltonian flow of $H(z)$, and has origins in the Selberg trace formula.  Since $g$ is rapidly decreasing, the trace reflects the distribution of the spectrum near $E$ as $\h \to 0$.  In particular, the trace formula is typically stronger than the Weyl formula over an interval $I \subset \RR$
\begin{equation}\label{WeylIntro}
		\#(\text{spec}(\hat{H})\cap I) = (2\pi \h)^{-d} \text{Vol} \{H^{-1}(I)\} + O(\h^{1-d}).
\end{equation}

At this stage, we recall the parallel line of work for problems involving eigenfunctions of the Laplacian on a compact Riemannian manifold.  The relation begins with the elementary observation that multiplying the Helmholtz operator $\lambda^2+ \Delta $ by $\h^2$, where $\h = \lambda^{-1}$, results in the semiclassical operator $1+\h^2 \Delta $, and hence this embeds into the analysis above by setting $\hat H = -\h^2 \Delta$ and $E=1$.  In this way, the trace formula above reflects the spectral asymptotics of the Laplacian in the high frequency limit $\lambda \to \infty$.  The works of Chazarain \cite{ChazarainPoisson} and Duistermaat and Guillemin \cite{DuistermaatGuillemin} give rigorous asymptotics on the trace of $g(\lambda^{-1}(\lambda^2-\Delta))$ as $\lambda \to \infty$.

Since $g$ is assumed to be rapidly decreasing, the operators $g(\h^{-1}(E-\hat H))$ in some sense favors the eigenspaces corresponding to the part of the spectrum near $E$.  Moreover, functions in the range of $g(\h^{-1}(E-\hat H))$ give families of approximate eigenfunctions (quasimodes) in that 
\begin{equation*}
	\|(E-\hat H) g(\h^{-1}(E-\hat H)) \|_{L^2(\RR^\d) \to L^2(\RR^\d)} = O(\h), \quad \text{ as } \h \to 0.
\end{equation*}
However, these are arguably very weak approximations as the $O(\h)$ bound here generally cannot be replaced by any larger power of $\h$.  In light of the observations above, one natural way to improve upon this approximation is to instead consider functions $g(|\log\h|\h^{-1} (E-\hat H))$, which improves the $O(\h)$ error to $O(\h/|\log\h|)$. If $\hat{g}$ is of sufficiently small compact support, generalizing \eqref{fourierres} gives
\begin{equation}\label{fourierreslog}
	g\Big( \frac{|\log\h|}{\h} (E-\hat H)\Big) =  \frac{1}{2\pi|\log\h|} \int e^{\frac{itE}{\h}} \U_t \,\hat{g}(t/|\log\h|) \,dt.
\end{equation}
Exploiting this formula effectively thus places a premium on obtaining uniform approximations to $K_t(x,y)$ on time scales $|t| \ll |\log\h|$.  For the Laplacian on a compact Riemannian manifold, this has been achieved in works of B\'erard \cite{Berard77} and Volovoy \cite{Volovoy}. These works give geometric and dynamical conditions that improve the $O(\h^{1-d})$ error in \eqref{WeylIntro} to $O(\h^{1-d}/|\log\h|)$ by proceeding as in \eqref{fourierreslog} (though they do not give higher order asymptotics).

The Van Vleck formula is therefore significant in the analysis of both time-dependent and stationary Schr\"odinger equations.  Taken together, this motivates our interest in developing uniform approximations to $K_t(x,y)$ on Ehrenfest time scales, which at present is admittedly more mathematical than physical.  In particular, it would be interesting to examine trace formulae for the operators in \eqref{fourierreslog} as well as complete asymptotic expansions of their kernels.  Recent progress on both of these problems is due to Canzani and Galkowski \cite{canzani2020weyl}, though their asymptotics are of lower order and the geodesic beam techniques developed there do not explicitly involve an analysis of a long time parametrix as suggested by \eqref{fourierreslog}.

As noted above, the rigorous derivations of the Van Vleck formula typically proceed by justifying an approximation to $K_t(x,y)$ by microlocal methods, then applying stationary phase to the oscillatory integrals which result.  The catch is that justifying these approximations come at a great technical expense if one wishes to obtain uniform approximations up to the Ehrenfest time.  Early work in this direction involved the Laplacian on a compact Riemannian manifold. The work \cite{Berard77} concerns manifolds without conjugate points/caustics and shows that after lifting the problem to the universal cover, the Hadamard parametrix is effective to this end.  The later work \cite{Volovoy} operates in a much more general setting and instead shows that the global theory of FIOs allows for effective approximations up to the Ehrenfest time.  But this involved delicate estimates on numerous parametrizations of the underlying Lagrangian submanifolds.

In the present work, we work with a different parametrix often referred in the physics literature as the \emph{Herman-Kluk} approximation, which was rigorously justified up to Ehrenfest time scales by Swart and Rousse in \cite{SwartRousse} and later by Robert in \cite{RobertHK}.  In what follows, we cite results from the latter work as it gives precise bounds on the amplitude and its derivatives. The main idea is to define the complex phase
\begin{equation}\label{phaseintro}
	\Phi(t,x,y,q,p) = S(t,q,p) + p_t(q,p)\cdot(x-q_t(q,p)) - p\cdot(y-q) + \frac i2 \big(|x-q_t(q,p)|^2 + |y-q|^2 \big),
\end{equation}
then find an amplitude $a(t,q,p)$ so that 
\begin{equation}\label{HKintro}
	K_t(x,y) \approx \frac{1}{(2\pi\h)^{3d/2}}\int_{T^*\RR^d} e^{\frac i\h \Phi(t,x,y,q,p)}a(t,q,p) \,dqdp.
\end{equation}
In this sense, the Herman-Kluk approximation can be viewed as a superposition of Gaussians whose centers have been propagated along the classical motion.  In the chemical physics literature these are sometimes referred to as ``frozen'' Gaussian approximations since the profile of the Gaussian is unchanged.  This is slightly different from the approach in \cite{BilyRobert}, which uses ``thawed'' Gaussian approximations that amount to replacing the term $|x-q_t(q,p)|^2$ by 
\begin{equation*}
	\Gamma_t(q,p) (x-q_t(q,p))\cdot (x-q_t(q,p)), \text{ with } \Im \Gamma_t >0,
\end{equation*}
and $\Gamma_t$ solves a Riccati equation, allowing the profile of the Gaussian to evolve with time.  

As with \cite{LaptevSigal}, \cite{BilyRobert}, one of the virtues of Herman-Kluk integral \eqref{HKintro} is that it only involves one oscillatory integral, rather than a sum of them.  Consequently, obtaining an approximation of Van Vleck type just amounts to applying the principle of stationary phase (for complex phases) near critical points of the phase in \eqref{phaseintro}.  But the caveat here is that if $|t|$ is allowed to vary in an $\h$-dependent interval, as opposed to a fixed interval, we obtain a phase which essentially depends on $\h$ itself.  To handle this, we arrive at the second topic mentioned in the title.

\subsection{Stationary phase asymptotics for frequency-dependent phases} Consider the $\h\to 0$ asymptotics of oscillatory integrals $I_\h$ defined by $C^\infty$, $\h$-dependent phases and amplitudes $\phi_\h,u_\h$:
\begin{equation}\label{initialIdef}
  I_\h := \int_{\RR^{\d}} e^{\frac{i}{\h}\phi_\h(x)}u_\h(x)\,dx. 
\end{equation}
We start with the typical hypotheses that $\phi_\h, u_\h$ are complex valued and
\begin{equation}\label{minimalsphyp}
\frac{\prtl \phi_\h}{\prtl x}(0)=0, \text{ and } \Im \phi_\h(x) \geq 0 \text{ on }\supp(u_\h) \text{ with } \Im \phi_\h(0)=0.
\end{equation}
Next, we introduce parameters $\mu,\nu,\sigma \geq 0$ which are assumed to control the growth of the $C^\infty$ phases and amplitudes.  We assume that the parameters $\mu,\nu$ control the growth of the derivatives of the phase, and $\sigma$ controls the growth of inverse of the Hessian in that
\begin{equation}\label{phihyp}
\begin{split}
|\prtl^\alpha_x \phi_\h(x)| &\lesssim_{\alpha}
      \h^{-\mu-\nu(|\alpha|-2)} \text{ for } |\alpha|\geq 2,
\\
    &\Big\|\Big( \frac{\prtl^2 \phi_\h}{\prtl x^2}(0)\Big)^{-1}\Big\| \lesssim \h^{-\sigma},
\end{split}
\end{equation}
Moreover, we assume $\mu,\nu,\sigma$ satisfy
\begin{equation}\label{asymptotichyp}
	1>5\mu+6\sigma+2\nu.
\end{equation}
From these three parameters, we define
\begin{equation}\label{rhodef}
	\rho = \sigma+\nu+\mu
\end{equation}
We then assume that $\rho$ controls the growth of the derivatives of the amplitude in that
\begin{equation}\label{amplitudehyp}
  |\prtl^\alpha_x  u_\h(x)|   \lesssim_\alpha \h^{-\rho|\alpha| }, \text{ and }
  \supp(u_\h) \subset \big\{x :|x|\leq \veps \h^\rho \big\} \text{ for some $\veps>0$ sufficiently small}.
\end{equation}
Under the hypotheses above, we obtain the following uniform estimates on the asymptotic expansion of the integrals $I_\h$, which will be proved in \S\ref{S:sp} below.
\begin{theorem}\label{T:spthm}
	Suppose the phase and amplitude of $I_\h$ in \eqref{initialIdef} satisfy \eqref{minimalsphyp}, \eqref{phihyp},  \eqref{amplitudehyp}, for some parameters $\rho,\sigma,\nu,\mu$ satisfying \eqref{asymptotichyp}, \eqref{rhodef}.  Then for each integer $N \geq 1$,
\begin{equation}\label{spconclusion}
  \left| I_\h - (2\pi\h)^{\frac{\d}{2}}
  \frac{e^{\frac i\h\phi_\h(0)}}{\det\!^{1/2}\left(\frac 1i\frac{\prtl^2\phi_\h}{\prtl x^2}(0)\right) }
  \sum_{j=0}^{N-1}\h^j \Lsc_{j,h}u_\h \big|_{x=0} \right|\\
  \lesssim_k \left|\det\!^{-1/2}\mbox{$\left(\frac 1i\frac{\prtl^2\phi_\h}{\prtl x^2}(0)\right)$}\right|\h^{(1-\sigma-2\rho)N+\frac \d2},
\end{equation}
where the $\Lsc_{j,\h}$ are the differential operators  defined by
\begin{gather}
  \Lsc_{j,\h} f :=
  \sum_{\ell-m=j}\sum_{2\ell \geq 3m} \frac{i^{-j}2^{-\ell}}{\ell!m!}
  \Big\langle\mbox{$\left(\frac{\prtl^2\phi_\h}{\prtl x^2}(0)\right)^{-1}$} D_x, D_x\Big\rangle^{\ell}\big(g_\h^m f\big) \label{Ljdef},\\
   g_\h(x) := \phi_\h(x)-\phi_\h(0)-\Big(\frac{\prtl^2\phi_\h}{\prtl x^2}(0) \Big) x \cdot x.
\label{gdef}
\end{gather}
For each $k$, the implicit constant in \eqref{spconclusion} depends only on finitely many of the implicit constants appearing in \eqref{phihyp}, \eqref{amplitudehyp}.  Moreover, the hypothesis give that $ |\Lsc_{j,h}u_\h |_{x=0}| \lesssim_j \h^{-j(\sigma+2\rho)}$.
\end{theorem}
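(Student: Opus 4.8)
The plan is to run the classical reduction of stationary phase — localize near the critical point $0$, use the exact complex‑Gaussian formula there, integrate by parts away from it — but with every length scale and every frequency scale made explicit in terms of $\mu,\nu,\sigma$. After replacing $\phi_\h$ by $\phi_\h-\phi_\h(0)$ (which multiplies both $I_\h$ and the asserted main term by the common factor $e^{\frac i\h\phi_\h(0)}$) we may assume $\phi_\h(0)=0$. Write $A_\h:=\frac{\prtl^2\phi_\h}{\prtl x^2}(0)$; by \eqref{minimalsphyp} and \eqref{phihyp} we have $\Im A_\h\geq 0$, $\|A_\h^{-1}\|\lesssim\h^{-\sigma}$ and $\|A_\h\|\lesssim\h^{-\mu}$. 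Rescale $x=\h^\rho y$: then $I_\h=\h^{\rho\d}\int e^{\frac i\h\Psi_\h(y)}v_\h(y)\,dy$ with $\Psi_\h(y):=\phi_\h(\h^\rho y)$, $v_\h(y):=u_\h(\h^\rho y)$, and by \eqref{amplitudehyp} the new amplitude satisfies $|\prtl^\alpha_y v_\h|\lesssim_\alpha 1$ and $\supp v_\h\subset\{|y|\leq\veps\}$, while $\Psi_\h(0)=0$, $\prtl_y\Psi_\h(0)=0$, $\prtl_y^2\Psi_\h(0)=\h^{2\rho}A_\h$, and $|\prtl^\alpha_y\Psi_\h|\lesssim_\alpha\h^{\rho|\alpha|-\mu-\nu(|\alpha|-2)}$ for $|\alpha|\geq2$. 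The choice $\rho=\sigma+\nu+\mu$ in \eqref{rhodef} is exactly the one for which the second‑order part of $\Psi_\h$ dominates its Taylor remainder on $\{|y|\leq\veps\}$: from $|\prtl_y\Psi_\h(y)|\geq\|A_\h^{-1}\|^{-1}\h^{2\rho}|y|-C\h^{3\rho-\mu-\nu}|y|^2\geq\h^{2\rho+\sigma}|y|\,(1-C|y|)$ one obtains the ellipticity bound $|\prtl_y\Psi_\h(y)|\geq c\,\h^{2\rho+\sigma}|y|$ once $\veps$ is fixed small.

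Next I would fix $\chi\in C_c^\infty$ equal to $1$ on $\{|y|\leq\frac12\delta_\h\}$ and supported in $\{|y|\leq\delta_\h\}$, with $\delta_\h=\h^\beta$, where $\beta$ lies in the interval
\[
\tfrac13\tau\;<\;\beta\;<\;\tfrac12(\tau-\sigma-\mu),\qquad \tau:=1-3\sigma-2\mu-2\nu,
\]
and split the rescaled integral as $v_\h=\chi v_\h+(1-\chi)v_\h$. This is the conceptual crux of the proof, and the place where \eqref{asymptotichyp} is used: $\tau>0$, and after rearranging, $\tfrac13\tau<\tfrac12(\tau-\sigma-\mu)$ is equivalent to $1>5\mu+6\sigma+2\nu$, so the window of admissible $\beta$ is nonempty precisely under \eqref{asymptotichyp}. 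The lower constraint $\beta>\tau/3$ will force the cubic part of $\Psi_\h$ to be $o(1)$ on $\supp\chi$, which is what legitimizes expanding $e^{\frac i\h g_\h}$ there; the upper constraint $\beta<(\tau-\sigma-\mu)/2$ will force each step of the non‑stationary integration by parts on $\supp(1-\chi)$ to gain a positive power of $\h$.

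On the far piece $\h^{\rho\d}\int e^{\frac i\h\Psi_\h}(1-\chi)v_\h\,dy$ I would integrate by parts $N$ times using the transpose of the first‑order operator built from $f_\h:=\overline{\prtl_y\Psi_\h}/|\prtl_y\Psi_\h|^2$, which fixes $e^{\frac i\h\Psi_\h}$; because $\Im\Psi_\h=\Im\phi_\h\geq0$ on the support one has $|e^{\frac i\h\Psi_\h}|\leq1$, so no exponential growth is incurred. On $\supp(1-\chi)v_\h$ the ellipticity bound gives $|\prtl_y\Psi_\h|\gtrsim\h^{2\rho+\sigma}\delta_\h$, while $|\prtl^2_y\Psi_\h|\lesssim\h^{2\rho-\mu}$ — the anisotropy of $A_\h$ is what produces the extra $\h^{-\mu}$ here — so $f_\h$ has size $\lesssim\h^{-2\rho-\sigma}\delta_\h^{-1}$ and varies on scale $\lesssim\h^{\mu+\sigma}\delta_\h$; tracking this through the $N$‑fold iteration shows that each application gains a factor $\lesssim\h^{1-2\rho-\mu-2\sigma}\delta_\h^{-2}=\h^{\tau-\sigma-\mu-2\beta}$, a positive power of $\h$ whose exponent tends to $\tfrac13(1-5\mu-6\sigma-2\nu)$ as $\beta\downarrow\tau/3$. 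Choosing $\beta$ close enough to $\tau/3$ and $N=3k$, the far piece is $\lesssim_k\h^{\rho\d+(1-5\mu-6\sigma-2\nu)k}$, and this is precisely the dominant contribution to the error in \eqref{spconclusion}.

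On the near piece I would write $\phi_\h(x)=\tfrac12\langle A_\h x,x\rangle+g_\h(x)$ with $g_\h$ as in \eqref{gdef}, vanishing to third order at $0$, so that on $\supp\chi$ one has $|g_\h/\h|\lesssim\h^{3\rho-\mu-\nu-1}\delta_\h^3=\h^{3\beta-\tau}=o(1)$, justifying the expansion $e^{\frac i\h g_\h}=\sum_{m<M}\frac1{m!}(ig_\h/\h)^m+r_M$ with $|r_M|\lesssim\frac1{M!}\h^{(3\beta-\tau)M}$ on $\supp\chi$. Undoing the rescaling and applying to each monomial the exact complex‑Gaussian identity $\int_{\RR^\d}e^{\frac{i}{2\h}\langle A_\h x,x\rangle}w\,dx=(2\pi\h)^{\d/2}\det^{-1/2}(\tfrac1iA_\h)\big[e^{-\frac{i\h}{2}\langle A_\h^{-1}D,D\rangle}w\big](0)$ — valid for Schwartz $w$ when $\Im A_\h\geq0$ and $\det A_\h\neq0$, with the branch of the square root continued from $A_\h>0$, which is the convention in \eqref{spconclusion} — and then expanding the Fourier multiplier to finite order in $\h$, the prefactor combines correctly, $\h^{\rho\d}\cdot(2\pi\h)^{\d/2}\det^{-1/2}(\tfrac1i\h^{2\rho}A_\h)=(2\pi\h)^{\d/2}\det^{-1/2}(\tfrac1i\frac{\prtl^2\phi_\h}{\prtl x^2}(0))$; a term with $\ell$ factors of $\langle A_\h^{-1}D,D\rangle$ and $m$ factors of $g_\h$ carries $\h^{\ell-m}$ and is nonzero only when $2\ell\geq3m$, and reindexing by $j=\ell-m$ reproduces exactly $\sum_{j<k}\h^j\Lsc_{j,\h}u_\h|_{x=0}$ with $\Lsc_{j,\h}$ as in \eqref{Ljdef}. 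The leftover contributions — the terms with $\ell-m\geq k$, the Taylor tail $m\geq M$, and the multiplier tail $\ell\geq L$ — are bounded via \eqref{phihyp} and \eqref{amplitudehyp} by a direct count in which each $A_\h^{-1}$ costs $\h^{-\sigma}$, each derivative landing on $g_\h$ beyond the third costs $\h^{-\nu}$, and each derivative landing on $u_\h$ costs $\h^{-\rho}$; this yields, for the block labeled by $j$, an exponent at least $j\tau$, so (taking $M,L$ of size $\gtrsim k$, which also makes the implicit constant depend on only finitely many of the constants in \eqref{phihyp}, \eqref{amplitudehyp}) these are all $O_k\!\big(\h^{(1-\sigma)\d/2+k\tau}\big)$, which by \eqref{asymptotichyp} is dominated by the far‑piece bound. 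Assembling the two pieces gives \eqref{spconclusion}. The expected main obstacle, beyond this uniform bookkeeping, is the scale‑compatibility in the second paragraph — producing a single cutoff radius $\delta_\h$ that is at once fine enough for the near expansion and coarse enough for the far integration by parts — together with the careful tracking of the anisotropy of $A_\h$ in the far estimate, which is what turns $\tau-2\beta$ into $\tau-\sigma-\mu-2\beta$ and thereby produces the exponent $1-5\mu-6\sigma-2\nu$.
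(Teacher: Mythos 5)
Your proposal reaches the correct conclusion but by a genuinely different route from the paper. Both proofs open with the same dilation $x=\h^\rho y$, and your ellipticity bound $|\prtl_y\Psi_\h(y)|\gtrsim \h^{2\rho+\sigma}|y|$ is exactly the paper's \eqref{ycontrol} modulo the normalization $\Phi=\h^{\mu-2\rho}\phi$. After that, however, you take the classical route: introduce a cutoff at an $\h$-dependent scale $\delta_\h=\h^\beta$, handle the annulus by iterated non-stationary integration by parts, and handle the core by expanding $e^{ig_\h/\h}$ and applying the exact complex-Gaussian identity. The paper instead never introduces any scale beyond the original support radius $\veps$ of the rescaled amplitude: it follows H\"ormander's Theorem 7.7.5 by interpolating $J(s)=\int e^{\frac i\delta(\frac12 Ay\cdot y+sG)}u\,dy$ in $s$, reading off the main terms from $J^{(m)}(0)$ via Lemma 7.7.3, and controlling the Taylor remainder $J^{(2k)}(s)$ with the degenerate non-stationary phase estimate of Theorem 7.7.1, where the factor $G^{2k}$ (vanishing to order $6k$ at $0$) cancels the singularity of $|\prtl\Phi_s/\prtl y|^{|\alpha|-6k}$ there. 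This is the structural difference: the role your cutoff plays is played in the paper by the high-order vanishing of $G^{2k}$, which is why the paper has no analogue of your parameter $\beta$. Your observation that the window $\tfrac13\tau<\beta<\tfrac12(\tau-\sigma-\mu)$ is nonempty precisely when $1>5\mu+6\sigma+2\nu$ is a nice way of seeing where \eqref{asymptotichyp} comes from, and your far-piece gain per step, $\h^{\tau-\sigma-\mu-2\beta}\to\h^{\tau'/3}$ as $\beta\downarrow\tau/3$ with $\tau'=1-5\mu-6\sigma-2\nu$, does produce the claimed exponent after $N=3k$ steps. In the paper the same exponent emerges more directly: with $\delta=\h^{1+\mu-2\rho}$, the remainder $J^{(2k)}(s)$ costs $\delta^k$ from $3k$ integrations by parts, $\h^{2k(\rho-\nu)}$ from the $G^{2k}$ factor, and $\h^{-6k(\mu+\sigma)}$ from the $|y|^{-6k}$ weight via \eqref{ycontrol}, which multiplies out to $\h^{k\tau'}$.

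One caveat about your near-piece: you have not fully priced in the cutoff. Once you apply Lemma 7.7.3 (or equivalently expand the Gaussian multiplier) to $(\tilde G/\h)^m\chi v_\h$, the error is governed by $C^{2L+O(1)}$ norms of that amplitude, and every derivative falling on $\chi$ costs $\h^{-\beta}$ with $\beta$ near $\tau/3$ — a loss that has no counterpart in the paper and that must be absorbed by the $\|A^{-1}\|^{-L}$ gain and the vanishing of $\tilde G^m$. Your one-line claim that this gives an exponent $\geq j\tau$ per block, hence $O_k(\h^{(1-\sigma)d/2+k\tau})$, is plausible (a rough count gives the near-piece error exponent $(1-\sigma)\tfrac d2-\beta c_d+(\tau-2\beta)L_m+(3\beta-\tau)m$ for the $m$-th term truncated at $L_m$ Gaussian terms, which is indeed beaten by the far piece once $L_m\gtrsim k$), but it is the one step where the sketch is materially thinner than the paper's argument, and the $\h^{-\beta}$ costs are precisely the thing the paper's $J(s)$ device is designed to avoid. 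So: correct route, correct exponent arithmetic, genuinely different mechanism, but the near-piece bookkeeping needs to be carried out before this would count as a complete alternative proof.
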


If the phases $\phi_\h$ above satisfied bounds that were uniform in $\h$, i.e. if one took $\sigma = \mu=\nu =0$ in \eqref{phihyp}, then asymptotic expansion in \eqref{spconclusion} would follow by the usual proof of stationary phase asymptotics \emph{mutatis mutandis}.  Moreover, in such cases, one could allow for $\h$-dependent amplitudes whose derivatives may grow with $\h$ as in \eqref{amplitudehyp}. Indeed, results such as \cite[Theorem 7.7.5]{HormanderI} show that the errors in the asymptotic expansion can be controlled by the $C^{2k}$ norm of the amplitude.   Hence the novelty of the present result is that we allow for \emph{phases} whose derivatives grow with $\h$ and whose inverse Hessian may also grow with $\h$.  

Theorem \ref{T:spthm} is a natural extension of the stationary phase asymptotics, but we are unaware of any works that justify the full asymptotic expansion in our setting.  However, there has been recent interest in formulating more robust approaches to the $|I_\h| = O(\h^{{d/2}}|\det( \frac{\prtl^2\phi_\h}{\prtl x^2}(0))|^{-1/2})$ bound that results from \eqref{spconclusion}.  A work of Tacy \cite{TacyStatPhase} generalizes this bound for cases where the amplitude is more singular than \eqref{amplitudehyp}, enough so that asymptotics are not possible, and the growth of the Hessian akin to \eqref{phihyp} is allowed.  A work of Alazard, Burq, and Zuily \cite{AlazardBurqZuilyStatPhase}, give this bound in a fashion which reflects its dependence on the derivatives of the phase and amplitude in a precise way.  The more recent work of Oh and Lee \cite{OhLeeUniform} reexamines the bounds in both these works, allowing for even weaker hypotheses on the phase function.  

To prove Theorem \ref{T:spthm}, we first perform an $\h$-dependent dilation of variables in the integrals $I_\h$ in \eqref{initialIdef} so that the derivatives of the phase function are uniformly bounded.  We then use H\"ormander's approach to stationary phase in \cite[\S7.7]{HormanderI} to obtain bounds on the asymptotic expansions of the new oscillatory integrals.  

It is not clear if the condition in \eqref{asymptotichyp} is optimal, it simply arises in the approach we take here.  At first glance, the constraint on the support of the amplitude in \eqref{amplitudehyp} may appear to limit the value of the theorem, but we shall see that in our applications to the Van Vleck formula, we can obtain lower bounds on the imaginary part of the phase away from such small neighborhoods.

It would be interesting to extend Theorem \ref{T:spthm} to cases where $\phi_\h,u_\h$ depend smoothly on a second variable $y$, i.e. establish a stationary phase asymptotics with a parameter $y$.  In this case, if the critical locus $\{ \frac{\prtl \phi_\h}{\prtl x}(x,y) =0\}$ is assumed to lie in $\RR$ (necessarily the case when $\phi_\h$ is real-valued), then the asymptotics just follow from a simple translation of coordinates in the integrals $I_\h$.  Moreover, the formula \eqref{Ljdef} shows the dependence of the coefficients on the phase in an explicit manner, which in turn allows for an analysis of their regularity in $y$.  But for general complex phases, these matters, including what even defines the critical locus, are a little more subtle.

\subsection{The Van Vleck formula on Ehrenfest time scales}\label{SS:VVIntro}
In this section, we give a precise statement of our main result on the Van Vleck formula, followed by some important remarks.

We begin by detailing our hypotheses on the classical Hamiltonian $H(t,z)$. The first of these is drawn from Robert's work \cite{RobertHK} so that we may apply his rigorous version of the Herman-Kluk approximation.  We borrow the definition there that a symbol $b(z)$ on $\RR^{2d}$ is said to lie in $\Ocal_m$ if for each $k \geq m$, we have $ \|b\|_{\dot{C}_z^k}  < \infty$ (see \S\ref{SS:Notation} for notation such as $\dot{C}_z^k$).

\begin{enumerate}[label=\textbf{A\arabic*}]
	\item\label{HamHyp} \textbf{Assumptions on the Hamiltonian.} We allow the Hamiltonian $H$ to depend on $\h$, assuming it admits an asymptotic expansion 
	$
	H(t,z) \sim \sum_{j=0}^{\infty} \h^j H_j(t,z),	
	$
	where each $H_j(t,\cdot)$ lies in $\Ocal_{(2-j)_+}$ with
	\begin{equation}\label{Hjbds}
		\|H_j(t,\cdot)\|_{\dot{C}^k_z} \lesssim_k 1, \quad \text{ for } k \geq (2-j)_+ .
	\end{equation}
	It is then further assumed that if 
	$
		R_N(t,z):= \h^{-(N+1)}\Big(H(t,z) - \sum_{j=0}^N \h^j H_j(t,z)\Big) ,
	$
	then $R_N(t,\cdot)$ lies in $ \Ocal_0$ for $N \geq 1$ with 
	\begin{equation}\label{Rbds}
		\sup_{\h \in (0,1], t \in \RR}\| R_N(t,\cdot)\|_{\dot{C}_z^k} \lesssim_k 1 . .
	\end{equation}
	The hypotheses imply that the full symbol $H(t,\cdot)$ lies in $\Ocal_2(2d)$, that is, it is a \emph{subquadratic} symbol.  It is known that these hypotheses are sufficient to ensure the existence of $\U_t$.
	
	\item\label{FlowHyp} \textbf{Global assumptions on the flow.} We assume that the leading order symbol $H_0(t,z)$ generates a complete Hamiltonian flow $\kappa_t$ on $ T^*\RR^d$.  We further assume there is a continuous, even function $\omega(t) \geq 1$ which is nondecreasing for $t >0$ and satisfies both $0<\inf_{t> 0}\frac{\omega(t)}{t}$, 
\begin{equation}\label{flowbds}
	|\prtl_z^\gamma \kappa_t(z)| \lesssim_{\gamma} \omega(t)^{|\gamma|}, \qquad |\gamma| \geq 1.
\end{equation}
In the work of Bouzouina and Robert \cite[Lemma 2.2]{BouzouinaRobert}, it is shown that the subquadratic hypothesis on $H_0$ means that at the very least, this bound is satisfied with $\omega(t) = \exp(\Gamma|t|)$ where $\Gamma = \|H_0\|_{\dot{C}^2}$.

	\item\label{WHyp} \textbf{Assumptions on the semiclassical Fourier multiplier $\widehat{\Theta}$.} 
	Let $\Theta$ be a smooth, compactly supported function which is independent of $\h$.
	Our results will ultimately give asymptotics on compositions $\U_t \circ \widehat{\Theta}$ where $\widehat{\Theta}$ is the Fourier multiplier with symbol defined by the semiclassical $\h$-Fourier transform whose kernel is given by \eqref{Thetakernel} below.  In other words, $\widehat{\Theta}$ is the $\h$-Weyl quantization of a symbol $\Theta(p)$ which is independent of $q$.

	\item\label{PtsHyp} \textbf{Assumptions on the flow at points $y \in \RR^d$.} Given a pseudodifferential operator $\widehat{\Theta}$ as in \ref{WHyp}, we then make a pointwise hypothesis at some $y \in \RR^d$
\begin{equation}\label{nonsinghypmain}
	\sup \Big\{ \Big\| \Big(\frac{\prtl q_t}{\prtl p}(y,p)\Big)^{-1} \Big\|  : p \in \supp(\Theta) + B(0,\h^\rho) \Big\}\lesssim \h^{-\delta}.
\end{equation}
	Here $\rho,\delta>0$ are parameters whose role will be clarified in the main theorem.  Given $y$ satisfying these hypotheses, and another point $x \in \RR^d$, we introduce the notation 
	\begin{equation*}
		 \widetilde{\Xi}_t (x,y) = \{p \in \supp(\Theta) + B(0,\h^\rho) : q_t(y,p) =x\}.
	\end{equation*}
\end{enumerate}

Under these hypotheses we shall obtain the following Van Vleck approximation, proved in \S\ref{S:VV}.

\begin{theorem}\label{T:VVthm}
	Suppose the Hamiltonian $H$ satisfies the conditions \ref{HamHyp}, its flow satisfies \ref{FlowHyp}, and that $\widehat{\Theta}$ satisfies \ref{WHyp}.   Assume further that $t \neq 0$ and $\omega(t) \leq \h^{-\lambda}$ for some $\lambda >0$. Finally, suppose that $\delta\geq 0$ is such that 
	\begin{equation*}
		6\delta + 24\lambda <1,
	\end{equation*}
	and that \ref{PtsHyp} is satisfied with this value of $\delta$ and $\rho = \delta + 5\lambda$.  There exists an operator $\V_t$, with kernel $L_t$ admitting an asymptotic expansion, such that $\|\U_t \circ \widehat{\Theta} - \V_t\|_{L^2\to L^2} = O(\h^\infty)$, that is, 
	\begin{equation*}
		\|\U_t \circ \widehat{\Theta} - \V_t\|_{L^2\to L^2} \lesssim_N \h^N \quad \text{ for any }N.
	\end{equation*}
	If $\widetilde{\Xi}_t (x,y) = \emptyset$, then $|L_t(x,y)| \leq_N C_N \h^{N}$ for any $N$.  Otherwise, if $\widetilde{\Xi}_t (x,y) \neq \emptyset$, then for each $\eta \in \widetilde{\Xi}_t(x,y)$ there exists a sequence $\{ b_{\h,k}(y,\eta)\}_{k=0}^\infty$ so that the kernel $L_t(x,y)$ satisfies for $N \in \mathbb{N}$
	\begin{multline}\label{VVmaintheorem}
		\Big|L_t(x,y)-\frac{1}{(2\pi \h)^{\frac d2}}\sum_{ \eta \in \widetilde{\Xi}_t (x,y)}e^{\frac i\h S_\eta-i\frac{\pi}{2} \theta_\eta} 
		\Big|\det \frac{\prtl^2 S_\eta}{\prtl x \prtl y}\Big|^{1/2} 
		\sum_{k=0}^{N-1} \h^k b_{\h,k}(y,\eta) \Big|\\
		 \leq C_{\lambda,\delta,N} \h^{N(1-3\delta -12\lambda) -\frac{\d}{2}} \big(\# \widetilde{\Xi}_t (x,y)\big)\Big( \max_{\eta \in \widetilde{\Xi}_t (x,y)}\Big|\det \frac{\prtl^2 S_\eta}{\prtl x \prtl y}\Big|^{1/2} \Big).
	\end{multline}
	Here $S_\eta= S(t,y,\eta)$ where $S$ is as in \eqref{actionintro}, and $\theta_\eta \in \frac 12 \mathbb{Z}$ is the Maslov index determined by the path. 	The hypotheses imply that the sequence $\{b_{\h,k}\}_{k=0}^\infty$ satisfies 
	\begin{equation}\label{vvcoeffbds}
		|b_{\h,k}(y,\eta)| \lesssim_k \h^{-k(3\delta + 12 \lambda)}.
	\end{equation}
\end{theorem}

\noindent\textbf{Remarks.}
\begin{enumerate}[label=\textbf{\arabic*}.]
	\item We emphasize that the constant $C_{\lambda,\delta,N} $  in \eqref{VVmaintheorem} of course depends on $\lambda,\delta,N$ and implicitly the Hamiltonian as well, namely the implicit constants in \eqref{Hjbds}, \eqref{Rbds}, \eqref{nonsinghypmain}.  Otherwise, the bound is completely uniform for any choice of $t \neq 0$.  In particular, the approximation holds even if the hypothesis \eqref{nonsinghypmain} is violated at some time $t_1 \neq t$, so long as it holds at $t$.
	
	\item\label{R:coeffs} The coefficients $b_{\h,k}(y,\eta)$ here are allowed to depend on $\h$, a consequence of our liberal use of the Borel lemma. This means they are not uniquely determined: indeed, if $c_{\h,k}(y,\eta)$ satisfies $|c_{\h,k}(y,\eta)| \lesssim \h^{1-(k+1)(3\delta+12\lambda)}$, then $b_{\h,k},b_{\h,k+1}$ can be replaced by $b_{\h,k}+c_{\h,k},b_{\h,k+1} - \h^{-1}c_{\h,k}$ respectively (modulo the implicit constant in \eqref{vvcoeffbds}).
	However, our approach shows that
	\begin{equation}\label{vvleading}
		\Big| b_{\h,0}(y,\eta) - \Theta(\eta)\exp\big(-i\int_0^tH_1(s,\kappa_s(y,\eta))\,ds\big)\Big| \lesssim \h^{1-3\delta-12\lambda },
	\end{equation} 
	which gives a natural way to express the leading order coefficient in the approximation. 
	
	\item The hypotheses $6 \delta + 24 \lambda <1$ and $\rho = \delta + 5\lambda$ are admittedly technical hypotheses that are imposed so that the conditions \eqref{asymptotichyp}, \eqref{rhodef} are satisfied in Theorem \ref{T:spthm}.  To appreciate why this still yields meaningful results, it is helpful to reference the case where $\omega(t) = \exp(\Gamma|t|)$, so that the assumption $\omega(t) \leq \h^{-\lambda}$ is equivalent to saying that $|t| \leq \frac{\lambda}{\Gamma}|\log\h|$.  If it is  further assumed that $\delta = 0$, then \eqref{VVmaintheorem} gives uniform asymptotics when $|t| \leq \frac{1-\veps_0}{24\Gamma}|\log\h|$ for some $\veps_0>0$.  When $|t| \geq 1$, the bounds in \eqref{nonsinghypmain} with $\delta = 0$  are known to be satisfied when $H_0(q,p) = -\frac 12 \sum_{ij} g^{ij}(q)p_ip_j$ is a Hamiltonian generating a geodesic flow of a Riemannian metric with nonpositive sectional curvatures and follow by Jacobi field comparison estimates.  More generally if the metric has no conjugate points, then such bounds follow for some $\delta>0$ as a consequence of works of Green \cite{GreenConjugatePoints} (treating $d=2$ only) and Bonthonneau \cite{BonthonneauTheta}.  
	
	\item Arguably the presence of the Fourier multiplier $\widehat{\Theta}$ limits the generality of Theorem \ref{T:VVthm} since it does restrict to momenta contained in a compact set.  However, for many Schr\"odinger operators of interest, the sets $\Xi_t(x,y) = \{ p: q_t(y,p) =x \}$ may naturally be contained in a compact set anyway.  For example, in \cite{BilyRobert} it is observed that if $H(t,z) = \frac 12 |p|^2 + V(t,q)$ where $|\prtl_q V(t,q)| \lesssim (1+|q|)^{1-\veps}$, then $\Xi_t(x,y)$ is indeed contained in a compact set since Hamiltonian rays escape to infinity.  On the other hand, including $\hat \Theta$ in the calculation may be significant in applications.
	
	\item It would be interesting to investigate the extent to which the hypothesis in \eqref{nonsinghypmain} can be relaxed.  Even for problems on uniformly bounded time intervals, understanding the asymptotics of $L_t(x,y)$ at points where $x=q_t(y,\eta)$ and $\frac{\prtl q_t}{\prtl \eta}(y,\eta)$ drops rank is already subtle.  On the Ehrenfest time scales considered here, there appears to be the additional challenge of coping with the contributions of cases where we may have $x \neq q_t(y,\eta)$, but $|x-q_t(y,\eta)| \lesssim \h^{\alpha}$ for some $ \alpha >\frac 12$ so that $\eta$ may influence the asymptotics even though formally it does not lie in $\Xi_t(x,y)$.  Nonetheless, we believe that the techniques further developed here may be illuminating in such problems.
	
	\item As noted above, in this work we use the subquadratic hypothesis of Robert \cite{RobertHK} detailed in assumption \ref{HamHyp}.  This allows for us to appeal to his bounds \eqref{ajbdslongtime} below on the amplitudes appearing in the Herman-Kluk approximation, minimizing the technicalities in the present work.  However, we expect that (variations on) the approach here is applicable to larger classes of Hamiltonians.  In particular, the aforementioned results on propagation of coherent states \cite{CombescureRobertWavePackets} should apply more broadly, and hence it may be possible to combine the ``thawed Gaussian" approach of Bily and Robert with the stationary phase asymptotics here to relax the subquadratic hypothesis.
	
	\item While Theorem \ref{T:VVthm} makes no assumption as to whether $|t|$ is small or large, the hypothesis \eqref{nonsinghypmain} is often violated for small times $|t| \ll \h^{\delta}$ as it is common that $\|(\frac{\prtl q_t}{\prtl p})^{-1}\| \approx t^{-1}$. However, the asymptotics for small times are already known anyway and typically can be approached by other means.
\end{enumerate}

\subsection{Notation}\label{SS:Notation}
The notation $A \lesssim B$ means that $A \leq CB$ for some uniform constant $C$, which may depend on $H$, but is always taken to be independent of $\h$.  When the implicit constant $C$ depends on other quantities of significance, they are typically included in the subscript of $\lesssim$.  We routinely omit dependence on $\h$ in our notation, with the understanding that it is an implicit parameter.  However, it is sometimes significant for us to exhibit the dependence of certain norms taken with respect to a certain variable.  For example, given a $C^k$ function $f(y)$, we use the notation
\begin{equation*}
	\|f\|_{\dot{C}_y^k} =  \sum_{|\alpha| = k} \sup_{y \in \RR^d} |\prtl_y^\alpha f(y)|, \text{ and } \|f\|_{C_y^k} =   \sum_{j=0}^k  	\|f\|_{\dot{C}_y^j},
\end{equation*}
to define homogeneous and inhomogeneous $C^k$ seminorms defined with respect to $y$ (as opposed to some other variable).  When no subscript is given in a norm, it should be interpreted as a matrix norm. We also use the common conventions that $D_{y_j} = \frac 1i \prtl_{y_j}$ and $x_+ = \frac 12 (x+|x|)$ when $x \in \RR$.

\subsection*{Acknowledgements} The author is grateful to Didier Robert and Peter Miller for helpful comments on this work.  He was supported in part by the National Science Foundation grant DMS-1565436.

\section{Stationary phase asymptotics for frequency dependent phases}\label{S:sp}
In this section, we prove Theorem \ref{T:spthm}.  We will drop the $\h$ in the notation for $\phi, u, g$ for convenience. Without loss of generality, we also assume 
\begin{equation*}
\phi(0) =0.
\end{equation*}
We begin by recalling two theorems in \cite{HormanderI}, with modest changes to their statements.
\begin{theorem}[H\"ormander, Theorem 7.7.1 in \cite{HormanderI}]\label{T:HoNSP} Let $\mathcal{K} \subset \RR^d$ be a compact set, $\mathcal{X}$ an open neighborhood of $\mathcal{K}$ and $k$ a nonnegative integer.  If $u \in C_0^k (\mathcal{K})$, $\phi \in C^{k+1}(\mathcal{X})$, and $\Im \phi \geq 0$ in $\mathcal{X}$, then for $0< \delta<1$ 
\begin{equation}\label{HoNSP}
\Big| \int_{\RR^{\d}} e^{\frac{i}{\delta}\Phi (y)}u (y)\,dy \Big| \leq C_k \delta^k \sum_{|\alpha| \leq k} \sup |\prtl_y^\alpha u |\Big| \frac{\prtl \Phi }{\prtl y}\Big|^{|\alpha|-2k}
\end{equation}
Here $C_k$ is bounded when $\Phi $ stays in a bounded set in $C^{k+1}(\mathcal{X})$.
\end{theorem}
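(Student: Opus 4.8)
The plan is to run the classical \emph{non-stationary phase} argument: manufacture a first-order differential operator that reproduces $e^{\frac i\delta\Phi}$ under application, integrate by parts $k$ times, and then carefully estimate the amplitude that results.

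First I would reduce to the nontrivial case. When $k=0$, \eqref{HoNSP} is immediate from $|e^{\frac i\delta\Phi}| = e^{-\frac1\delta\Im\Phi}\leq 1$ on $\mathcal X$ together with $|\supp u|\leq |\mathcal K|$; this is the only point at which the hypothesis $\Im\Phi\geq 0$ enters. For $k\geq1$ one may assume $\frac{\prtl\Phi}{\prtl y}\neq 0$ everywhere on the compact set $\supp u$, since otherwise the $\alpha=0$ summand $\sup|u|\,|\frac{\prtl\Phi}{\prtl y}|^{-2k}$ on the right of \eqref{HoNSP} is $+\infty$ and the inequality is vacuous. Under this assumption the vector field
\[
L \;=\; \frac1i\sum_j \frac{\overline{\prtl_{y_j}\Phi}}{|\prtl\Phi/\prtl y|^{2}}\,\prtl_{y_j}
\]
has $C^{k}$ coefficients on a neighborhood of $\supp u$ (this uses $\Phi\in C^{k+1}$), and a direct computation gives $L\,e^{\frac i\delta\Phi}=\tfrac1\delta e^{\frac i\delta\Phi}$, hence $e^{\frac i\delta\Phi}=\delta^{k}L^{k}e^{\frac i\delta\Phi}$. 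Since $u\in C_0^{k}(\mathcal K)$, integrating by parts $k$ times (legitimate because the coefficients of $L$ are $C^k$ near $\supp u$ and $u$ has compact support, so no boundary terms arise) moves $L^{k}$ off the exponential:
\[
\int_{\RR^{\d}} e^{\frac i\delta\Phi(y)}u(y)\,dy \;=\; \delta^{k}\int_{\RR^{\d}} e^{\frac i\delta\Phi(y)}\,({}^{t}L)^{k}u(y)\,dy,\qquad {}^{t}L\,v=-\sum_j\prtl_{y_j}\!\Big(\tfrac1i\tfrac{\overline{\prtl_{y_j}\Phi}}{|\prtl\Phi/\prtl y|^{2}}v\Big).
\]
Taking absolute values and again using $|e^{\frac i\delta\Phi}|\leq1$, the theorem reduces to the pointwise bound
\[
\big|({}^{t}L)^{k}u(y)\big|\;\leq\;C_k\sum_{|\alpha|\leq k}|\prtl^{\alpha}u(y)|\,\Big|\frac{\prtl\Phi}{\prtl y}(y)\Big|^{|\alpha|-2k},\qquad y\in\supp u,
\]
after which integrating over $\supp u\subset\mathcal K$ and absorbing $|\mathcal K|$ into the constant gives \eqref{HoNSP}.

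The remaining step — and, I expect, the only genuinely delicate one — is this pointwise estimate, which I would prove by induction on the number of applications of ${}^{t}L$. The bookkeeping is that ${}^{t}L$ sends a term $b\,\prtl^{\alpha}u$ to $-\tfrac1i\tfrac{\overline{\prtl_{y_j}\Phi}}{|\prtl\Phi/\prtl y|^{2}}\,b\,\prtl_{y_j}\prtl^{\alpha}u$ plus $-\prtl_{y_j}\!\big(\tfrac1i\tfrac{\overline{\prtl_{y_j}\Phi}}{|\prtl\Phi/\prtl y|^{2}}b\big)\prtl^{\alpha}u$, so each application either raises $|\alpha|$ by one while multiplying the coefficient by a factor of modulus $\lesssim|\prtl\Phi/\prtl y|^{-1}$, or leaves $|\alpha|$ fixed while multiplying by a factor of modulus $\lesssim\|\Phi\|_{C^{2}(\mathcal X)}|\prtl\Phi/\prtl y|^{-2}$; more generally, each further differentiation of a coefficient raises the order of the pole at $\{\prtl\Phi/\prtl y=0\}$ by at most one and produces a bounded factor involving one additional derivative of $\Phi$. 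Hence after $k$ applications $u$ carries at most $k$ derivatives, only derivatives of $\Phi$ of order $\leq k+1$ occur, and every term is $\lesssim\|\Phi\|_{C^{k+1}(\mathcal X)}^{N_k}|\prtl^{\alpha}u|\,|\prtl\Phi/\prtl y|^{-q}$ with $q\leq 2k-|\alpha|$. Since $|\prtl\Phi/\prtl y|\leq\|\Phi\|_{C^{1}(\mathcal X)}$ on the compact set $\supp u$, any such term with $q<2k-|\alpha|$ is dominated by $\|\Phi\|_{C^{1}(\mathcal X)}^{2k}|\prtl^{\alpha}u|\,|\prtl\Phi/\prtl y|^{-(2k-|\alpha|)}$, which closes the induction with a constant $C_k$ that is a fixed power of $1+\sum_{|\beta|\leq k+1}\sup_{\mathcal X}|\prtl^{\beta}\Phi|$ — in particular bounded when $\Phi$ stays in a bounded subset of $C^{k+1}(\mathcal X)$. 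The main obstacle is precisely keeping this accounting honest through all the Leibniz terms, i.e.\ verifying that the pole order at $\{\prtl\Phi/\prtl y=0\}$ never exceeds $2k-|\alpha|$ and stays balanced against the number of $u$-derivatives exactly as in the stated exponent; the complex-valuedness of $\Phi$ otherwise plays no role, all estimates being on moduli.
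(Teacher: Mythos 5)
The heart of the matter is your reduction ``for $k\geq 1$ one may assume $\frac{\partial\Phi}{\partial y}\neq0$ on $\supp u$.'' This step is not valid: the supremum in \eqref{HoNSP} is of the \emph{product} $|\partial^\alpha u|\,|\partial\Phi/\partial y|^{|\alpha|-2k}$ over $\supp u$, so the $\alpha=0$ summand need not be $+\infty$ when $\Phi'$ has a zero $y_0\in\supp u$ --- it suffices that $u$ (and its derivatives) vanish fast enough there, e.g.\ $|u(y)|\lesssim|\Phi'(y)|^{2k}$ near $y_0$. A one-dimensional example with $\Phi(y)=y^2$ and $u(y)=y^{4k}\chi(y)$ for a bump $\chi$ makes all $k+1$ summands finite while $\Phi'(0)=0$ and $0\in\supp u$. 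And this is precisely the regime in which the theorem is invoked in the paper: the remark immediately following the theorem says it ``will be applied in cases where $u$ vanishes to sufficiently high order at $0$ to cancel out the singularity of $|\frac{\partial\Phi}{\partial y}|^{|\alpha|-2k}$,'' and in \eqref{J2kbound} the phase $\Phi_s$ is stationary at $y=0$, which lies in the support of the amplitude $G^{2k}u$. So the case you discard is exactly the one needed.

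In that case your vector field $L$ has coefficients with a singularity at $\{\Phi'=0\}\cap\supp u$, so it is not $C^k$ on a neighborhood of $\supp u$ and the repeated integration by parts is not justified as written. The rest of your argument --- the identity $Le^{\frac i\delta\Phi}=\frac1\delta e^{\frac i\delta\Phi}$, the transfer to $({}^tL)^k u$, the inductive pole-order bookkeeping, the bound on the constant in terms of $\|\Phi\|_{C^{k+1}}$ --- is sound where $\Phi'\neq0$, and the pointwise estimate $|({}^tL)^k u|\lesssim\sum_{|\alpha|\leq k}|\partial^\alpha u|\,|\Phi'|^{|\alpha|-2k}$ does hold on $\{\Phi'\neq0\}$. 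What is missing is a mechanism to push this through the zero set, for example excising $\{|\Phi'|<\varepsilon\}$, integrating by parts in the complement, and verifying that all boundary terms tend to zero as $\varepsilon\to0$ using the very bounds $|\partial^\alpha u|\lesssim|\Phi'|^{2k-|\alpha|}$ that make the right-hand side finite, and that the remaining integral converges absolutely. Without such a limiting argument (or some equivalent device, as in H\"ormander's own proof) the proposal does not cover the case that matters here.
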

\begin{theorem}[H\"ormander, Lemma 7.7.3 in \cite{HormanderI}]\label{T:HoQuad} Let $A$ be a symmetric, non-degenerate matrix with $\Im A \geq 0$.  Then  for every integer $k>0$  and every Schwartz class function $u$ on $\RR^d$,
\begin{multline}\label{hoquadL2}
\Big| \int_{\RR^{\d}} e^{\frac{i}{2\delta}Ay \cdot y}u (y)\,dy - \frac{(2\pi \delta)^{\frac d2}}{\det\!^{1/2}(\frac 1i A)} \sum_{j=0}^{k-1} \frac{1}{j!}\Big(\frac{\delta}{2i}\Big)^j \langle A^{-1} D_y,D_y \rangle^j u(0)\Big| \\
\leq C_{k,d}\, \delta^{\frac d2 + k} \|A^{-1}\|^{\frac d2 + k} \sum_{|\alpha| \leq 2k+\lceil \frac{d+1}{2} \rceil} \|\prtl^\alpha u\|_{L_y^2}.
\end{multline}
\end{theorem}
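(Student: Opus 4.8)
The plan is to reproduce Hörmander's argument \cite[\S7.7]{HormanderI}: pass to the frequency side via Plancherel, where the oscillatory Gaussian becomes an ordinary (complex) Gaussian, then Taylor expand that Gaussian to order $k$ and recognize the resulting polynomial terms as the derivatives $\langle A^{-1}D_y,D_y\rangle^j u(0)$. First I would record two algebraic facts about $A$. Writing $A=B+iC$ with $B,C$ real symmetric and $C=\Im A\geq 0$, a short computation gives $\Im(A^{-1})=-A^{-1}C(A^{-1})^*\leq 0$; also the eigenvalues of $A^{-1}$ have modulus at most $\|A^{-1}\|$, so $|\det A|^{-1/2}=|\det A^{-1}|^{1/2}\leq \|A^{-1}\|^{d/2}$. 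Next, for $\Im A>0$ the Gaussian Fourier transform formula
\begin{equation*}
	\int_{\RR^{d}} e^{\frac{i}{2\delta}Ay\cdot y}\, e^{-iy\cdot\xi}\,dy=\frac{(2\pi\delta)^{d/2}}{\det\!^{1/2}(\tfrac1i A)}\, e^{-\frac{i\delta}{2}A^{-1}\xi\cdot\xi}
\end{equation*}
holds by analytic continuation from real positive-definite $A$, with the branch of $\det\!^{1/2}$ fixed to be positive when $A/i$ is positive definite (consistent since $\Re(A/i)=\Im A>0$). Both sides are continuous in $A$ down to $\{\Im A\geq 0,\ \det A\neq 0\}$ as tempered distributions in $\xi$ --- the left side because $e^{\frac{i}{2\delta}(A+i\veps)y\cdot y}\to e^{\frac{i}{2\delta}Ay\cdot y}$ in $\mathcal{S}'$ as $\veps\downarrow 0$ and the Fourier transform is $\mathcal{S}'$-continuous, the right side because $\Im(A^{-1})\leq 0$ keeps the limiting exponential bounded --- so the identity persists on the closed half-space.

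With $\widehat{w}(\xi)=\int e^{-iy\cdot\xi}w(y)\,dy$ and Plancherel in the form $\int fg\,dy=(2\pi)^{-d}\int\widehat f(\xi)\widehat g(-\xi)\,d\xi$, taking $f(y)=e^{\frac{i}{2\delta}Ay\cdot y}$, $g=u$ and using that the Gaussian is even in $\xi$ gives
\begin{equation*}
	\int_{\RR^d} e^{\frac{i}{2\delta}Ay\cdot y}u(y)\,dy=\frac{(\delta/2\pi)^{d/2}}{\det\!^{1/2}(\tfrac1i A)}\int_{\RR^d} e^{-\frac{i\delta}{2}A^{-1}\xi\cdot\xi}\,\widehat u(\xi)\,d\xi.
\end{equation*}
Now I would Taylor expand the integrand. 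Setting $w=w(\xi)=-\frac{\delta}{2}A^{-1}\xi\cdot\xi$, one has $\Im w=-\frac{\delta}{2}\Im(A^{-1})\xi\cdot\xi\geq 0$, so the Taylor formula with integral remainder $e^{iw}-\sum_{j<k}\frac{(iw)^j}{j!}=\frac{(iw)^k}{(k-1)!}\int_0^1(1-s)^{k-1}e^{isw}\,ds$ and the bound $|e^{isw}|=e^{-s\Im w}\leq 1$ yield $\big| e^{iw}-\sum_{j=0}^{k-1}\frac{(iw)^j}{j!}\big|\leq\frac{|w|^k}{k!}\leq\frac{1}{k!}\big(\tfrac{\delta}{2}\big)^k\|A^{-1}\|^k|\xi|^{2k}$. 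Since $(iw)^j=\big(\tfrac{\delta}{2i}A^{-1}\xi\cdot\xi\big)^j$, this controls the error in replacing $e^{-\frac{i\delta}{2}A^{-1}\xi\cdot\xi}$ by $\sum_{j<k}\tfrac1{j!}\big(\tfrac{\delta}{2i}A^{-1}\xi\cdot\xi\big)^j$ inside the integral.

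For the polynomial terms, the relation $\widehat{D_y^\alpha u}(\xi)=\xi^\alpha\widehat u(\xi)$ (with $D_{y_j}=\frac1i\prtl_{y_j}$) together with Fourier inversion $u(0)=(2\pi)^{-d}\int\widehat u\,d\xi$ gives $\int(A^{-1}\xi\cdot\xi)^j\widehat u(\xi)\,d\xi=(2\pi)^d\langle A^{-1}D_y,D_y\rangle^j u(0)$; since $(\delta/2\pi)^{d/2}(2\pi)^d=(2\pi\delta)^{d/2}$, substituting back reproduces exactly the asserted main term. It remains to estimate the error, which is $(\delta/2\pi)^{d/2}\det\!^{-1/2}(\tfrac1i A)$ times the integrated Taylor remainder; using $|\det\!^{1/2}(\tfrac1i A)|^{-1}\leq\|A^{-1}\|^{d/2}$ it is bounded by $C_k\,\delta^{d/2+k}\|A^{-1}\|^{d/2+k}\int|\xi|^{2k}|\widehat u(\xi)|\,d\xi$. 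Finally, for $m=2k+\lceil\frac{d+1}{2}\rceil$ (chosen so $2m-4k>d$), Cauchy--Schwarz and then Plancherel give
\begin{equation*}
	\int|\xi|^{2k}|\widehat u(\xi)|\,d\xi\leq\Big(\int\frac{|\xi|^{4k}}{(1+|\xi|)^{2m}}\,d\xi\Big)^{1/2}\Big(\int(1+|\xi|)^{2m}|\widehat u(\xi)|^2\,d\xi\Big)^{1/2}\lesssim_{k,d}\sum_{|\alpha|\leq m}\|\prtl^\alpha u\|_{L^2_y},
\end{equation*}
which is precisely the claimed bound.

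The only genuinely delicate step is the first paragraph: one must pin down the branch of $\det\!^{1/2}$ and justify the Gaussian Fourier transform formula for merely $\Im A\geq 0$ (not $\Im A>0$) via the $\mathcal{S}'$-limit $A+i\veps\to A$, the crucial point being $\Im A\geq 0\Rightarrow\Im(A^{-1})\leq 0$, which is what keeps both the frequency-side Gaussian and the Taylor remainder $e^{isw}$ bounded. Everything after that is bookkeeping of constants, the essential accounting being $|\det A|^{-1}\leq\|A^{-1}\|^{d}$, so that the prefactor contributes the $\|A^{-1}\|^{d/2}$ and the Taylor remainder the $\|A^{-1}\|^{k}$, and the Cauchy--Schwarz exponent is calibrated to produce exactly $2k+\lceil\frac{d+1}{2}\rceil$ derivatives.
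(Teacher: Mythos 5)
Your proof is correct and is essentially the argument the paper is citing: the paper states this result as Hörmander's Lemma 7.7.3 without reproving it, and your reconstruction (Gaussian Fourier transform formula extended to $\Im A \geq 0$ by an $\mathcal{S}'$-limit, Plancherel, Taylor expansion of $e^{-\frac{i\delta}{2}A^{-1}\xi\cdot\xi}$ with the remainder controlled via $\Im(A^{-1})\leq 0$, and Cauchy--Schwarz calibrated so that $2m-4k>d$) is precisely the standard proof from \cite[\S 7.6--7.7]{HormanderI}. The bookkeeping of the prefactors, the bound $|\det A|^{-1/2}\leq\|A^{-1}\|^{d/2}$, and the choice $m=2k+\lceil\frac{d+1}{2}\rceil$ all check out.
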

A crucial feature of Theorem \ref{T:HoNSP} for us is last line, emphasizing that the nature of H\"ormander's proof means that the constant $C_k$ can be taken as uniform as long as $\Phi$ lies in a bounded set in $C^{k+1}(\mathcal{X})$.   Indeed, since the proof relies on a standard integration by parts involving the phase, the constant is independent of the amplitude $u$. The theorem will be applied in cases where $u$ vanishes to sufficiently high order at 0 to cancel out the singularity of $|\frac{\prtl \Phi }{\prtl y}|^{|\alpha|-2k}$ there.  Similarly, a key feature of Theorem \ref{T:HoQuad} is that the constant $C_{k,d}$ on the right hand side of \eqref{hoquadL2} depends only on $k,d$, meaning it will suffice to estimate the contribution of $\|A^{-1}\|$.  

Below, we will consider $C^\infty$ functions $u$ with support contained in a common compact set $\mathcal{K}$.  In this case, H\"older's inequality and \eqref{hoquadL2} imply the following bound, where the right hand side involves $C^j$ spaces instead of $L^2$
\begin{multline}\label{hoquadCk}
\Big| \int_{\RR^{\d}} e^{\frac{i}{2\delta}Ay \cdot y}u (y)\,dy - \frac{(2\pi \delta)^{\frac d2}}{\det\!^{1/2}(\frac 1i A)} \sum_{j=0}^{k-1} \frac{1}{j!}\Big(\frac{\delta}{2i}\Big)^j \langle A^{-1} D_y,D_y \rangle^j u(0)\Big| \\
\lesssim_{\mathcal{K},k,d}
 \delta^{\frac d2 + k} \|A^{-1}\|^{\frac d2 + k} \| u\|_{C^{2k+\lceil \frac{d+1}{2} \rceil }_y}.
\end{multline}

\begin{proof}[Proof of Theorem \ref{T:spthm}]
Once again, we stress that our approach to stationary phase relies on the treatment given by H\"ormander in \cite[\S7.7]{HormanderI}.  The only new contribution here is to observe the effect of dilations on the integrals.
	
We begin by observing that if $\Lsc_{j,h}$ defined as in \eqref{Ljdef}, \eqref{gdef}, then the hypotheses of Theorem \ref{T:spthm} imply that
\begin{equation}\label{sptermsbd}
	|\Lsc_{j,h}u_\h |_{x=0}| \lesssim_j \h^{-j(\sigma+2\rho)},
\end{equation}
which is the very last claim in that theorem.  A Taylor expansion shows that $g_\h(x) = \sum_{|\gamma|=3} x^\gamma r_\gamma(x)$, where $r_\gamma$ is determined by the third order partials of $\phi$ and hence satisfies $|\prtl^\alpha r_{\gamma} | \lesssim_\alpha \h^{-(\mu+\nu)-\nu|\alpha|}$ by \eqref{phihyp}.  We then have the following bound on derivatives of an $m$-fold product of the $r_\gamma$'s, which can be seen by induction on $m$,
\begin{equation*}
	\left|\prtl^\alpha\Big( \prod_{j=1}^{m}r_{\gamma_j} \Big)\right| \lesssim_{\alpha,m} \h^{- (\mu+\nu)m -\nu|\alpha|}.
\end{equation*}
Thus if $\ell-m=j$, $2\ell \geq 3m$, we have the following bound on the corresponding term in $\Lsc_{j,h}u_\h |_{x=0}$
\begin{equation}\label{lmbd}
	\left| \Big\langle\mbox{$\left(\frac{\prtl^2\phi_\h}{\prtl x^2}(0)\right)^{-1}$} D_x, D_x\Big\rangle^{\ell}\big(g_\h^m u \big) \Big|_{x=0} \right|\lesssim \h^{-\ell \sigma} \sum_{|\alpha| = 2\ell-3m}\big |\prtl^\alpha(r_m u)|_{x=0}\big|,
\end{equation}
which uses that $g^m$ has a zero of order $3m$ and the inverse Hessian bound in \eqref{phihyp}. The Leibniz rule then shows that each term in the sum on the right satisfies
\begin{equation*}
	\big|\prtl^\alpha(r_m u)|_{x=0}\big| \lesssim_j \sum_{\alpha_1+\alpha_2 = \alpha} \h^{-(\mu+\nu)m - \nu|\alpha_1| -\rho|\alpha_2|} \lesssim_j \h^{-(\mu+\nu)m - \rho|\alpha|} = \h^{-(\mu+\nu)m - \rho(2\ell-3m)}, 
\end{equation*}
where the last bound uses that $\rho\geq \nu$.  The bound \eqref{sptermsbd} now follows since $\rho = \mu+ \nu+ \sigma$ (cf. \eqref{rhodef}), the right hand side of \eqref{lmbd} is thus seen to be dominated by
\begin{equation*}
	 \h^{-\sigma \ell-(\mu+\nu)m - \rho(2\ell-3m)} = \h^{-j(\sigma+2\rho)}.
\end{equation*}

The next observation is that it suffices to show that for each integer $k\geq 1$,
	\begin{equation}\label{spreduction}
		\left| I_\h - (2\pi\h)^{\frac{\d}{2}}
		\det\!^{-1/2}\left(\frac 1i\frac{\prtl^2\phi_\h}{\prtl x^2}(0)\right) 
		\sum_{j=0}^{k-1}\h^j \Lsc_{j,h}u_\h \big|_{x=0} \right|\\
		\lesssim_k \h^{(1-5\mu-6\sigma -2\nu )k+\rho\d}.
	\end{equation}
To see this, first note that the assumptions in \eqref{phihyp} give the lower bound
$$
\h^{\frac{\d\mu}2 } \lesssim \left|\det\!^{-1/2}\mbox{$\left(\frac 1i\frac{\prtl^2\phi_\h}{\prtl x^2}(0)\right)$}\right|
$$
Consequently, to see \eqref{spconclusion} for some integer $N \geq 1$, take $k\geq N$ large enough so that the right hand side of \eqref{spreduction} is bounded above by the right hand side of \eqref{spconclusion}.  The bounds \eqref{sptermsbd} then imply
\begin{equation*}
	\left| (2\pi\h)^{\frac{\d}{2}}
	\det\!^{-1/2}\left(\frac 1i\frac{\prtl^2\phi_\h}{\prtl x^2}(0)\right) 
	\sum_{j=N}^{k-1}\h^j \Lsc_{j,h}u_\h \big|_{x=0} \right| \lesssim \left|\det\!^{-1/2}\mbox{$\left(\frac 1i\frac{\prtl^2\phi_\h}{\prtl x^2}(0)\right)$}\right| \h^{N(1-\sigma-2\rho)+ \frac \d2},
\end{equation*}
at which point \eqref{spconclusion} follows.
	
	We now make the change of variables $y = \h^{-\rho} x$ in the integral $I_\h$ in \eqref{initialIdef}. Setting 
\begin{equation*}
	\delta = \h^{1+\mu-2\rho} ,
\end{equation*}	
we are led to the following oscillatory integral, which is identical to $\h^{-\rho d}I_\h$:
\begin{equation}\label{yintegral}
 \int_{\RR^{\d}} e^{\frac{i}{\delta}\Phi(y)}u(y)\,dy, \qquad \Phi(y) := \h^{\mu-2\rho}\phi(y).
\end{equation}
Given \eqref{phihyp} and the relation $\h^{\rho}\frac{\prtl}{\prtl x_j} = \frac{\prtl}{\prtl y_j} $ from the chain rule, the $k$-th derivatives of $\Phi$ with respect to $y$ satisfy
\begin{equation}\label{bigphihyp}
\|\Phi\|_{\dot{C}^k_y} \lesssim_k \h^{(\rho-\nu)(k-2)_+}, \qquad k \geq 0.
\end{equation}
Indeed, for $k \geq 2$, this just follows from \eqref{phihyp}, the definition of $\Phi$, and the chain rule.  For $k=0,1$ this just follows by using a Taylor expansion to bootstrap the bound in the $k= 2$ case, in light of our assumptions $\Phi(0)=0$, $\frac{\prtl \Phi}{\prtl y}(0) =0$.
Moreover, we have the relation
\begin{equation}\label{hessianchange}
\frac{\prtl^2 \Phi}{\prtl y^2}(y) = \h^{\mu}\frac{\prtl^2 \Phi}{\prtl x^2}(x).
\end{equation}
We further observe that with $\veps$ as in \eqref{amplitudehyp},
\begin{equation}\label{ubounds}
\supp(u) \subset \{y: |y| \leq \veps\} \quad \text{and } \quad \| u\|_{\dot{C}^k_y} \lesssim_k 1, \qquad k \geq 0.
\end{equation}

As in \cite[Theorem 7.7.5]{HormanderI}, define $G$ as the remainder in the quadratic Taylor expansion of $\Phi$,
\begin{equation}\label{GAdef}
G (y): = \Phi(y) - \frac 12 Ay\cdot y, \qquad \text{ where }A: = \frac{\prtl^2 \Phi}{\prtl y^2}(0) = \h^{\mu}\frac{\prtl^2 \Phi}{\prtl x^2}(0).
\end{equation}
We may also write $G$ as 
\begin{equation*} 
G(y) = \sum_{|\gamma| = 3} R_\gamma(y) y^\gamma, 
\qquad 
\text{ where }
\|R_\gamma\|_{\dot{C}_y^\ell } \lesssim_\ell \h^{(\ell+1)(\rho-\nu)}, \text{ for } \ell \geq 0.
\end{equation*}
The bounds on $R_\gamma$ follow from \eqref{bigphihyp} as they are controlled by the $\dot{C}^{3+\ell}$ norm of $\Phi$.
Induction on $m$ now gives the following bound on the product of the $R_\gamma$'s
\begin{equation}\label{rgammaprod}
\Big\|\prod_{j=1}^m R_{\gamma_j} \Big\|_{\dot{C}_y^\ell} \lesssim_{\ell ,m} \h^{(\ell+m)(\rho-\nu)} \quad \text{ for any }\ell \geq 0, m \geq 1.
\end{equation}

Asymptotics on \eqref{yintegral} now follow by the argument in \cite[Theorem 7.7.5]{HormanderI}. Define for $s \in [0,1]$ 
\begin{equation}\label{Jdef}
J(s) := \int_{\RR^{\d}} e^{\frac{i}{\delta} \Phi_s(y) }u(y)\,dy, \quad  \text{ where  }\Phi_s(y) := \frac 12 Ay \cdot y + sG(y).
\end{equation}
Hence $J(1)$ is equal to the integral in \eqref{yintegral} and any derivative of $J$ at $s=0$ is an oscillatory integral with purely quadratic phase, see \eqref{Jmexpress} below.  

Before proceeding, we observe some lower bounds on $\frac{\prtl \Phi_s}{\prtl y}$.
Note that \eqref{phihyp}, \eqref{hessianchange} give 
\begin{equation}\label{Ainvbds}
	\|A^{-1}\| \lesssim h^{-(\mu+\sigma)}.
\end{equation}
A Taylor expansion of $\frac{\prtl \Phi_s}{\prtl y} (y)$ implies that with $\veps$ as in \eqref{amplitudehyp}, we have for $|y| \leq \veps$,
\begin{equation*}
	\Big| \frac{\prtl \Phi_s}{\prtl y}(y) - Ay \Big| \lesssim \|\Phi\|_{\dot{C}_y^3}|y|^2 \lesssim \veps \h^{\rho-\nu}  |y| = \veps \h^{\mu+\sigma}  |y|,
\end{equation*}
since $\rho= \mu+\nu+\sigma$ by definition. Consequently, 
\begin{equation*}
	|y| \lesssim \h^{-(\mu+\sigma)} |Ay| \lesssim \h^{-(\mu+\sigma)} \Big| \frac{\prtl \Phi_s}{\prtl y}(y) \Big| + \veps  |y|,
\end{equation*}
 so by taking $\veps>0$ sufficiently small in \eqref{amplitudehyp}, we have
\begin{equation}\label{ycontrol}
	|y| \lesssim \h^{-(\mu+\sigma)} \Big| \frac{\prtl \Phi_s}{\prtl y}(y) \Big| .
\end{equation}

Returning to $J(s)$, a Taylor remainder estimate gives
\begin{equation}\label{JTaylor}
\left|J(1)  - \sum_{m=0}^{2k-1} \frac{1}{m!}J^{(m)} (0)\right| \leq \frac{1}{(2k)!}\sup_{0<s<1}|J^{(2k)}(s)|.
\end{equation}
Here $J^{(2k)}(s)$ is given by
\begin{equation*}
J^{(2k)}(s) = (-1)^k\delta^{-2k}\int e^{\frac{i}{\delta}(\frac 12 Ay \cdot y + sG(y))}[G(y)]^{2k}u(y)\,dy.
\end{equation*}
Hence \eqref{ubounds}, \eqref{rgammaprod}, \eqref{ycontrol} give for $|\alpha| \leq 3k$
\begin{equation*}
\big|\prtl^\alpha \big( [G(y)]^{2k} u(y)\big)\big| \lesssim_{k} \h^{2k(\rho-\nu)}|y|^{6k-|\alpha|} \lesssim_{k}  \h^{2k(\rho-\nu)-(6k-|\alpha|)(\mu+\sigma)}\Big| \frac{\prtl \Phi_s}{\prtl y} (y)\Big|^{6k-|\alpha|} .
\end{equation*}
Indeed, since $G^{2k}$ vanishes to order $6k$, the largest contributions to the left hand side here come from terms where all derivatives fall on the monomials $y^{\gamma}$ with $|\gamma| = 6k$, meaning the bound is dictated by the $\ell =0$ case of \eqref{rgammaprod}.  In this argument and below, it may be helpful to note that the bounds on $\Phi, G, R_\gamma$ show that the more times they are differentiated, the larger the power of $\h$ in the bound; hence the largest possible contributions typically result from considering the cases with fewest possible derivatives on these functions. 

Theorem \ref{T:HoNSP} with $k$ replaced by $3k$ now gives
\begin{equation}\label{J2kbound}
\begin{split}
|J^{(2k)}(s)| &\lesssim_k \delta^k \sum_{|\alpha| \leq 3k} \sup_y \big|\prtl^\alpha \big( [G(y)]^{2k} u(y)\big)\big| \Big| \frac{\prtl \Phi_s}{\prtl y} (y)\Big|^{|\alpha|-6k}\\
& \lesssim_k \h^{k(1+\mu-2\rho) + 2k(\rho-\nu)-6k(\mu+\sigma)} = \h^{k(1-5\mu-6\sigma -2\nu )} .
\end{split}
\end{equation}
Note that the largest possible contributions to the sum in the first line come from the $|\alpha| =0$ cases.

We now examine the terms in the Taylor expansion of $J$ in \eqref{JTaylor} 
\begin{equation}\label{Jmexpress}
J^{(m)}(0) = i^m \delta^{-m}\int e^{\frac{i}{2\delta}Ay \cdot y }[G(y)]^{m}u(y)\,dy,  \quad \text{ where } 0 \leq m \leq 2k-1.
\end{equation}
By \eqref{rgammaprod}, we have
\begin{equation*}
\|G^m u\|_{C_y^{2(k+m)+\lceil \frac{d+1}{2} \rceil}} \lesssim_{k,m}   \h^{m(\rho-\nu)},
\end{equation*}
since the largest contributions from the left hand side come from estimating terms of the form $G^m \prtl^{\alpha} u$ (for $|\alpha| \leq 2(k+m)+\lceil \frac{d+1}{2} \rceil$).
Now define
\begin{equation*}
E_{k,m} :=  J^{(m)}(0)- \frac{(2\pi \h)^{\frac d2}}{{\det\!^{1/2}(\frac 1i A ) }}\sum_{\ell =0}^{k+m-1} \frac{1}{2^{\ell}\ell !}(-i\delta)^{\ell-m}
\langle A^{-1} D_y,D_y \rangle^\ell [G(y)]^{m}u(y) \Big|_{y=0} ,
\end{equation*}
so that Theorem \ref{T:HoQuad}, with $k$ replaced by $k+m$, gives the following bound  
\begin{equation*}
| E_{k,m} | \lesssim _{k,m} \delta^{k+\frac d2} \|A^{-1}\|^{k+m+\frac d2} \|G^m u\|_{C_y^{2(k+m) + \lceil \frac{d+1}{2} \rceil}}  \lesssim _{k,m} \h^{(1-2\rho-\sigma)(k+\frac d2)}.
\end{equation*}
Note that the right hand side here is smaller than the upper bound on $|J^{2k}(s)|$ in \eqref{J2kbound}.

We have now shown that
\begin{equation}\label{yexpn}
\Big| \h^{-\rho d} I_\h - \frac{(2\pi \delta)^{\frac d2}}{{\det\!^{1/2}(\frac 1i A ) }}
\sum_{m=0}^{2k-1} \sum_{\ell =0}^{k+m-1} \frac{(-i\delta)^{\ell-m}}{2^{\ell}\ell ! m!}
\langle A^{-1} D_y,D_y \rangle^\ell  [G(y)]^{m}u(y)\Big|_{y=0} \Big| \lesssim \h^{k(1-5\mu-6\sigma -2\nu )}.
\end{equation}
Define the differential operators
\begin{equation*}
\Msc_{j}f:= (-i)^{j}\sum_{\ell-m = j} \sum_{2\ell \geq 3m} \frac{1}{2^{\ell}\ell ! m!}
\langle A^{-1} D_y,D_y \rangle^\ell (G^m f) .
\end{equation*}
This allows us to rewrite the sum on the left in \eqref{yexpn} as 
\begin{equation*}
\sum_{m=0}^{2k-1} \sum_{\ell =0}^{k+m-1} \frac{(-i\delta)^{\ell-m}}{2^{\ell}\ell ! m!}
\langle A^{-1} D_y,D_y \rangle^\ell [G(y)]^{m}u(y) \Big|_{y=0}=\sum_{j=0}^{2k-1} \delta^j \Msc_{j}u  \Big|_{y=0}
\end{equation*}
Indeed, since $G^m$ vanishes to order $3m$, any term on the left hand side with $2\ell < 3m$ vanishes, so this is just a rearrangement of the nonzero terms in that sum.  We further observe that the functions $G,g$ defined in \eqref{GAdef}, \eqref{gdef} respectively satisfy $G(y) = \h^{\mu-2\rho} g(x) $.   Combining this with \eqref{hessianchange} and the relations $\delta = \h^{1+\mu-2\rho}$, $\h^{\rho}\frac{\prtl}{\prtl x_j} = \frac{\prtl}{\prtl y_j} $, we obtain that if $\ell-m = j$,
\begin{equation*}
\delta^{j}
\Big\langle \mbox{$\left(\frac{\prtl^2\phi}{\prtl y^2}(0)\right)^{-1}$} D_y,D_y \Big\rangle^\ell [G(y)]^{m} u(y) \Big|_{y=0} = \h^{j} \Big\langle\mbox{$\left(\frac{\prtl^2\phi_\h}{\prtl x^2}(0)\right)^{-1}$} D_x, D_x\Big\rangle^{\ell}  [g(x)]^m u(x) \Big|_{x=0}.
\end{equation*}
In other words, $\delta^j \Msc_{j} u|_{y=0} = \h^j \Lsc_{j,\h} u|_{x=0}$, where $\Lsc_{j,\h}$ is defined in \eqref{Ljdef}.

We lastly observe that again by the relation $\delta = \h^{1+\mu-2\rho}$ and  \eqref{hessianchange}
\begin{equation*}
\h^{\rho d}\frac{(2\pi \delta)^{\frac d2}}{{\det\!^{1/2}\big(\frac 1i \mbox{$\frac{\prtl^2\phi}{\prtl y^2}(0)$}  \big) }} = \frac{(2\pi \h)^{\frac d2}}{{\det\!^{1/2}\big(\frac 1i \mbox{$\frac{\prtl^2\phi}{\prtl x^2}(0)$}  \big) }}
\end{equation*}
Consequently, the desired estimate \eqref{spreduction} now follows by multiplying both sides of \eqref{yexpn} by $\h^{\rho d}$.
\end{proof}

\section{The Van Vleck Formula on Ehrenfest Time Scales}\label{S:VV}
\subsection{Regularity of the Hamiltonian flow}\label{SS:phasereg}
Recall our assumption \ref{FlowHyp} in \S\ref{SS:VVIntro} that $\kappa_t$ satisfies 
\begin{equation}\label{flowderivs}
	|\prtl_z^\gamma \kappa_t(z)| = |(\prtl_z^\gamma q_t(z),\prtl_z^\gamma p_t(z)) | \lesssim_{\gamma} \omega(t)^{|\gamma|}\text{ for }  |\gamma| \geq 1
\end{equation}
In this subsection, we examine the implications of this hypothesis for the regularity of the Herman-Kluk phase $\Phi$ introduced in \eqref{phaseintro}
\begin{equation}\label{HKphasedef}
	\Phi(t,x,y,q,p) := S(t,q,p) + p_t(q,p)\cdot(x-q_t(q,p)) - p\cdot(y-q) + \frac i2 \big(|x-q_t(q,p)|^2 + |y-q|^2 \big).
\end{equation}
In particular, we will show how the phase satisfies the hypotheses of Theorem \ref{T:spthm}.

Since $\kappa_t$ preserves the canonical symplectic 2-form on $T^*\RR^d$, its differential is given by the symplectic matrix
\begin{equation}\label{kappajacobian}
	d_z \kappa_t(z)
	=
	\begin{bmatrix}
		\frac{\prtl q_t}{\prtl q}(z) & \frac{\prtl q_t}{\prtl p}(z) \\
		\frac{\prtl p_t}{\prtl q}(z) & \frac{\prtl p_t}{\prtl p}(z)
	\end{bmatrix}	
	=:
	\begin{bmatrix}
		A_t(z) & B_t(z)\\
		C_t(z) & D_t(z)
	\end{bmatrix},
\end{equation}
so the matrix function $A_t(z)$ denotes $\frac{\prtl q_t}{\prtl q}(z)$, $B_t(z)$ denotes $ \frac{\prtl q_t}{\prtl p}(z)$, etc.  Recall that symplectic matrices in block form satisfy the fundamental identities 
\begin{equation}\label{symplecticID}
	A^T C = C^T A, \;B^T D = D^TB, \text{ and } A^TD-C^TB = I.
\end{equation}
 In what follows, we make use of the following bounds which follow from \eqref{flowderivs}:
\begin{equation}\label{ABCDbds}
	\|\prtl_z^\alpha A_t(z)\| + \|\prtl_z^\alpha B_t(z)\| + \|\prtl_z^\alpha C_t(z)\| + \|\prtl_z^\alpha D_t(z)\| \lesssim_\alpha \omega(t)^{|\alpha|+1} \leq  \h^{-\lambda(|\alpha| +1)} , \quad |\alpha| \geq 0.
\end{equation}

\begin{lemma}\label{L:phasereg}
	Suppose Assumption \ref{FlowHyp} in \S\ref{SS:VVIntro} is satisfied and $t$ is such that $\omega(t) \leq \h^{-\lambda}$. Then for $x,z$ satisfying $|x-q_t(z)| <1$, we have that   
	\begin{equation*}
		|\prtl_z^\alpha \Phi(t,x,y,z)| \lesssim_\alpha \omega(t)^{|\alpha|} \leq \h^{-\lambda|\alpha|}, \quad |\alpha| \geq 2.
	\end{equation*}
\end{lemma}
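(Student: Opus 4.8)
The plan is to reduce the bound to a statement about the \emph{first} $z$-derivatives of $\Phi$, and then differentiate one time fewer than asserted, using only the flow estimates. The point to exploit is that although the Herman--Kluk phase \eqref{HKphasedef} contains both the action $S(t,z)$ and the term $p_t(z)\cdot(x-q_t(z))$, and the $z$-gradient of neither is controlled by \ref{FlowHyp} on its own (each separately involves the momentum $p_t(z)$ along the flow, which is not bounded in $z$), the offending contributions cancel when the two gradients are added. So the only genuinely substantive step is to verify this cancellation; everything afterward is mechanical.

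To see the cancellation, first record the classical identities for $\nabla_z S$. Writing $d_z\kappa_t(z)$ in the block form $\begin{bmatrix} A_t & B_t \\ C_t & D_t \end{bmatrix}$ of \eqref{kappajacobian}, one differentiates \eqref{actionintro} in $z$ and integrates by parts in $s$ using Hamilton's equations \eqref{HamEqn} in the form $\dot q_s=\prtl_p H$, $\dot p_s=-\prtl_q H$; all interior terms cancel and the boundary contribution at time $t$ yields
\[
	\prtl_q S(t,z)=A_t(z)^T p_t(z)-p,\qquad \prtl_p S(t,z)=B_t(z)^T p_t(z).
\]
Differentiating the remaining four terms of \eqref{HKphasedef} directly -- the term $p_t\cdot(x-q_t)$ contributes exactly $-A_t^T p_t$ to the $q$-component and $-B_t^T p_t$ to the $p$-component -- the $p_t(z)$ contributions cancel against those coming from $\nabla_z S$, and one is left with
\[
	\nabla_z\Phi(t,x,y,z)=
	\begin{bmatrix}
	(C_t(z)-iA_t(z))^T\,(x-q_t(z))-i(y-q)\\
	(D_t(z)-iB_t(z))^T\,(x-q_t(z))+(q-y)
	\end{bmatrix}.
\]
Thus every entry of $\nabla_z\Phi$ is the sum of one of the Jacobian blocks of $\kappa_t$ (up to a factor of $-i$ and a transpose) applied to the vector $x-q_t(z)$, plus a function affine in $z$; in particular $S$ has disappeared from the expression.

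The rest is Leibniz bookkeeping. For $|\alpha|\geq 2$ write $\alpha=\alpha'+e_j$ with $|\alpha'|=|\alpha|-1\geq 1$, so $\prtl_z^\alpha\Phi=\prtl_z^{\alpha'}\!\big(\prtl_{z_j}\Phi\big)$. The $\prtl_z^{\alpha'}$-derivative of the affine part of $\prtl_{z_j}\Phi$ is a constant (when $|\alpha'|=1$) or zero (when $|\alpha'|\geq 2$), hence $O(1)$ uniformly in $x,y$. For a term $M_t(z)^T(x-q_t(z))$ with $M_t\in\{A_t,B_t,C_t,D_t\}$, expand $\prtl_z^{\alpha'}$ by the Leibniz rule into finitely many summands $\binom{\alpha'}{\beta}\,\prtl_z^\beta(M_t^T)\,\prtl_z^{\alpha'-\beta}(x-q_t)$: when $\beta=\alpha'$ the factor $\prtl_z^{\alpha'}M_t^T$ is $\lesssim_\alpha\omega(t)^{|\alpha'|+1}$ by \eqref{ABCDbds} while $|x-q_t(z)|<1$ by hypothesis; when $|\beta|<|\alpha'|$ one has $|\prtl_z^{\alpha'-\beta}q_t|\lesssim_\alpha\omega(t)^{|\alpha'-\beta|}$ (valid since $|\alpha'-\beta|\geq 1$, by \ref{FlowHyp}) together with $\|\prtl_z^\beta M_t^T\|\lesssim_\alpha\omega(t)^{|\beta|+1}$, again from \eqref{ABCDbds}. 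In every case the power of $\omega(t)$ appearing is $|\alpha'|+1=|\alpha|$, so summing gives $|\prtl_z^\alpha\Phi|\lesssim_\alpha\omega(t)^{|\alpha|}$. Finally, since $1\leq\omega(t)\leq\h^{-\lambda}$ under the standing assumption, $\omega(t)^{|\alpha|}\leq\h^{-\lambda|\alpha|}$, which is the claim; the implicit constants depend only on $\alpha$ and on finitely many of the constants in \eqref{flowderivs}.

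The main obstacle, as noted, is the gradient identity for $\Phi$: establishing the cancellation of the $p_t(z)$ terms, which is where the classical action-gradient formulas are essential. Once that identity is in hand the estimate is routine; the role of the hypothesis $|x-q_t(z)|<1$ is merely to keep the undifferentiated factor $x-q_t(z)$ bounded, and the bound is automatically uniform in $y$ since $y$ occurs only in terms affine in $z$.
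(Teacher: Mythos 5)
Your proof is correct and follows essentially the same route as the paper: derive $\nabla_z S$ from \eqref{actionintro}, observe that the $p_t$ contributions cancel in $\nabla_z\Phi$ (your formula agrees with \eqref{Phifirstderivs} after rearranging signs), and then run Leibniz against the bounds \eqref{ABCDbds} on the Jacobian blocks. The only stylistic difference is that the paper writes out the Hessian \eqref{HessianHK} explicitly as a matrix plus a remainder vanishing at $x = q_t(z)$ and differentiates it $|\alpha|-2$ more times, whereas you differentiate $\nabla_z\Phi$ directly $|\alpha|-1$ more times; the bookkeeping is the same.
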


\begin{proof}
	Differentiating the action $S$ in \eqref{actionintro} with respect to $q,p$ gives
	\begin{equation*}
		\frac{\prtl S}{\prtl q} = A_t^T p_t - p, \qquad \frac{\prtl S}{\prtl p} = B_t^T p_t.
	\end{equation*}
	It is then seen that the first derivatives of $\Phi$ simplify to 
	\begin{equation}\label{Phifirstderivs}
		\begin{split}
					\frac{\prtl \Phi}{\prtl q} &= (iA_t-C_t)^T(q_t-x) + i(q-y),\\
			\frac{\prtl \Phi}{\prtl p} &= (iB_t-D_t)^T(q_t-x) + (q-y).
		\end{split}
	\end{equation}
	
	If $|x-q_t(z)| <1$, then the Hessian of $\Phi$ satisfies
	\begin{equation}\label{HessianHK}
		\frac 1i \frac{\prtl^2\Phi}{\prtl z^2}
		=
		\begin{bmatrix}
			(A_t+iC_t)^TA_t + I & (A_t+iC_t)^TB_t \\
			(B_t+iD_t)^TA_t-iI & (B_t+iD_t)^TB_t
		\end{bmatrix}
		+ R(t,z),
	\end{equation}
	where $R$ vanishes to first order at $x=q_t(z)$ and the $|\gamma| = |\alpha|+1$ case of \eqref{ABCDbds} gives
	\begin{equation*}
		|\prtl^\gamma R| \lesssim_\gamma \omega(t)^{|\gamma|+2} \leq  \h^{-\lambda(|\gamma|+2)}, \quad |\gamma| \geq 0.
	\end{equation*}
	The proof is now concluded by the following bound, which follows by \eqref{ABCDbds} and the Leibniz formula
	\begin{equation*}
		\Big|\prtl_z^\gamma\Big(\frac 1i \frac{\prtl^2\Phi}{\prtl z^2}-R\Big)\Big| \lesssim_\gamma \omega(t)^{|\gamma|+2} \leq \h^{-\lambda(|\gamma|+2)}, \quad |\gamma| \geq 0.
	\end{equation*}
\end{proof}
	
\begin{lemma}\label{L:invHessPhibd}
	Suppose Assumption \ref{FlowHyp} in \S\ref{SS:VVIntro} is satisfied and $t$ is such that $\omega(t) \leq \h^{-\lambda}$.   Suppose further that Assumption \ref{PtsHyp} is satisfied and that $\eta \in \widetilde{\Xi}_t(x,y)$.  Then the inverse of the Hessian of $\Phi$ at $(q,p) = (y,\eta)$ satisfies
	\begin{equation}\label{invHessPhibd}
		\Big\| \Big(\frac{\prtl^2 \Phi}{\prtl z^2}\Big|_{(q,p) = (y,\eta)} \Big)^{-1} \Big\| \lesssim \h^{-\delta-2\lambda}
	\end{equation}
\end{lemma}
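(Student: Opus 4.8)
\subsection*{Proof plan for Lemma \ref{L:invHessPhibd}}

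The plan is to exploit that at $(q,p)=(y,\eta)$ with $\eta\in\widetilde\Xi_t(x,y)$ one has $x=q_t(y,\eta)$, so the remainder $R(t,z)$ in \eqref{HessianHK} vanishes and $\frac 1i\frac{\prtl^2\Phi}{\prtl z^2}\big|_{(y,\eta)}$ equals \emph{exactly} the explicit $2d\times 2d$ block matrix appearing there, with $A_t,B_t,C_t,D_t$ evaluated at $(y,\eta)$. The only quantitative inputs available are: $\|B_t(y,\eta)^{-1}\|\lesssim\h^{-\delta}$ (this is \eqref{nonsinghypmain} applied to $\eta\in\widetilde\Xi_t(x,y)$, since $B_t=\prtl q_t/\prtl p$), together with the uniform bounds $\|A_t\|+\|B_t\|+\|C_t\|+\|D_t\|\lesssim\omega(t)\leq\h^{-\lambda}$ from \eqref{ABCDbds}. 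Crucially there is \emph{no} control of $\|A_t^{-1}\|$, $\|D_t^{-1}\|$, etc., so the argument must pull invertibility of the full Hessian out of the symplectic structure while charging only one matrix inverse, namely $B_t^{-1}$, to the hypothesis \ref{PtsHyp}.

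First I would factor the Hessian. Writing $\mathcal A:=A_t+iC_t$ and $\mathcal B:=B_t+iD_t$ (all at $(y,\eta)$), one checks directly that
\[
\frac 1i\frac{\prtl^2\Phi}{\prtl z^2}\Big|_{(y,\eta)}=P\,Q,\qquad
P=\begin{bmatrix}\mathcal A^T & I\\ \mathcal B^T & -iI\end{bmatrix},\qquad
Q=\begin{bmatrix}A_t & B_t\\ I & 0\end{bmatrix}.
\]
The factor $Q$ is invertible with $Q^{-1}=\begin{bmatrix}0 & I\\ B_t^{-1} & -B_t^{-1}A_t\end{bmatrix}$, so $\|Q^{-1}\|\lesssim\|B_t^{-1}\|(1+\|A_t\|)\lesssim\h^{-\delta-\lambda}$. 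For $P$ I would use the Schur complement against its lower-right block $-iI$ (invertible, with $(-iI)^{-1}=iI$); the relevant Schur complement is $\mathcal A^T-i\mathcal B^T=\big[(A_t+D_t)+i(C_t-B_t)\big]^T$, by the definitions of $\mathcal A,\mathcal B$. The block-inverse formula then yields $\|P^{-1}\|\lesssim(1+\|\mathcal B\|)\big(1+\|N^{-1}\|\big)$ where $N:=(A_t+D_t)+i(C_t-B_t)$, so the whole problem reduces to bounding $\|N^{-1}\|$ independently of $\h$.

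The heart of the matter is the following linear-algebra fact: for any symplectic $\begin{bmatrix}A & B\\ C & D\end{bmatrix}$ (satisfying \eqref{symplecticID}), the matrix $N=(A+D)+i(C-B)$ obeys $\|Nw\|^2\geq 2\|w\|^2$ for all $w\in\CC^d$, hence $N$ is invertible with $\|N^{-1}\|\leq 2^{-1/2}$. To prove it I would write $w=a+ib$ with $a,b\in\RR^d$, set $\xi=(a,b)^T$ and $\xi'=(b,-a)^T=J\xi$ with $J=\begin{bmatrix}0 & I\\ -I & 0\end{bmatrix}$, and observe that the real and imaginary parts of $Nw$ are $(\mathcal S\xi)_1-(\mathcal S\xi')_2$ and $(\mathcal S\xi)_2+(\mathcal S\xi')_1$, where $\mathcal S=\begin{bmatrix}A & B\\ C & D\end{bmatrix}$ and subscripts denote the two $\RR^d$-components. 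Expanding $\|Nw\|^2$ then gives $|\mathcal S\xi|^2+|\mathcal S\xi'|^2+2\big(\langle q,r\rangle-\langle p,s\rangle\big)$ with $\mathcal S\xi=(p,q)^T$, $\mathcal S\xi'=(r,s)^T$; and the cross term equals $-\langle\mathcal S\xi,J\mathcal S\xi'\rangle=-\langle\xi,J\xi'\rangle=|\xi|^2=\|w\|^2$, using $\mathcal S^TJ\mathcal S=J$ and $J\xi'=J^2\xi=-\xi$. Discarding the nonnegative terms $|\mathcal S\xi|^2+|\mathcal S\xi'|^2$ gives the bound. (This positivity is essentially standard in the metaplectic/Gaussian-beam calculus, but the short computation above is self-contained.)

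Assembling: $\|N^{-1}\|\leq 2^{-1/2}$ gives $\|P^{-1}\|\lesssim 1+\|\mathcal B\|\lesssim\h^{-\lambda}$, whence
\[
\Big\|\Big(\tfrac{\prtl^2\Phi}{\prtl z^2}\big|_{(y,\eta)}\Big)^{-1}\Big\|
=\|Q^{-1}P^{-1}\|\leq\|Q^{-1}\|\,\|P^{-1}\|\lesssim \h^{-\delta-\lambda}\cdot\h^{-\lambda}=\h^{-\delta-2\lambda},
\]
which is \eqref{invHessPhibd}. The only genuine subtlety is the symplectic invertibility lemma; the factorization $PQ$ is simply the device that isolates the single inverse $B_t^{-1}$ controlled by \ref{PtsHyp} from the rest of the blocks, which are only controlled polynomially in $\omega(t)$. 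I expect the bookkeeping of which norm carries $\h^{-\delta}$ versus $\h^{-\lambda}$ to be the only place where care is needed beyond that lemma.
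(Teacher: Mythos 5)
Your proof is correct, and it takes a route that is parallel in spirit to but distinct in mechanics from the paper's. The paper (following \cite[Ch.~3, Lemma~22]{CombescureRobertBook}) introduces $Y_t = A_t + D_t + i(B_t - C_t)$, $Z_t = A_t - D_t + i(B_t + C_t)$, cites the symplectic identity $Y_t^* Y_t = Z_t^* Z_t + 4I$ to get $\|Y_t^{-1}\| \leq 1/2$, and then simply \emph{writes down} the explicit $2\times 2$ block inverse of the Hessian in terms of $Y_t^{-*}$, $Z_t^T$, $A_t$, and $B_t^{-1}$, claiming it can be verified by direct computation. Your factorization $\frac 1i \frac{\partial^2\Phi}{\partial z^2}\big|_{(y,\eta)} = PQ$ is exact (one checks $PQ$ reproduces the block matrix in \eqref{HessianHK} with the remainder dropped), $Q^{-1}$ isolates the single hypothesis-controlled inverse $B_t^{-1}$, and the Schur complement of $P$ against the $-iI$ block is $\mathcal A^T - i\mathcal B^T = N^T$ with $N = (A_t+D_t) + i(C_t - B_t)$; since spectral norm is transpose-invariant this typo-level discrepancy (you call it $N$, it is $N^T$) is harmless. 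Your $N$ is exactly the complex conjugate $\overline{Y_t}$ of the paper's $Y_t$, so the two invertibility facts are equivalent; your direct symplectic-form computation showing $\|Nw\|^2 \geq 2\|w\|^2$ is a self-contained replacement for the cited Combescure--Robert identity (it even recovers the sharper $\|Nw\|^2 = |\mathcal S\xi|^2 + |\mathcal S\xi'|^2 + 2\|w\|^2$, which implies $\|N^{-1}\|\leq 1/2$; you only claim $2^{-1/2}$ by discarding the positive terms, but the constant is irrelevant). What your approach buys is that the inverse never has to be exhibited explicitly: the factorization plus Schur complement makes the $\h^{-\delta}\cdot\h^{-\lambda}\cdot\h^{-\lambda}$ bookkeeping transparent without asking the reader to verify a $2d\times 2d$ matrix identity by hand. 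What the paper's approach buys is an explicit formula for the inverse Hessian, which it does not actually need for this lemma but is of the same flavor as the factorization it uses later to compute $\det(\frac 1i \frac{\partial^2\Phi}{\partial z^2})$.
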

\begin{proof} 
	We begin by defining the matrix functions $Y_t(z),Z_t(z)$ by
	\begin{equation}\label{YZdefs}	
		Y_t := A_t+D_t +i(B_t-C_t), \quad Z_t := A_t-D_t +i(B_t+C_t) .
	\end{equation}
	As observed in \cite[Ch. 3, Lemma 22]{CombescureRobertBook}, the identities \eqref{symplecticID} then imply that $Y_t^*Y_t = Z_t^* Z_t + 4I$, where the $*$ denotes the adjoint matrix.  In particular this implies that $Y_t$ is not only invertible, but for any vector $v \in \mathbb{C}^d$, $|Y_t v| \geq 2|v|$. Hence all singular values of $Y_t$ are bounded below by 2, which in turn implies that 
	\begin{equation}\label{Yinvbd}
		\| Y_t^{-1}\| \leq 2.
	\end{equation}
	Moreover, $Y_t,Z_t$ satisfy the bounds in \eqref{ABCDbds}, 
	\begin{equation}\label{YZbd}
			\| \prtl_z^\alpha Y_t (z) \| +	\| \prtl_z^\alpha Z_t (z) \| \lesssim_{\alpha} \omega(t)^{|\alpha|+1} \leq \h^{-\lambda(|\alpha|+1)}, \quad |\alpha| \geq 0.
	\end{equation}

	We now compute the desired inverse Hessian using \eqref{HessianHK}.  Since $\eta \in \widetilde{\Xi}_t(x,y)$, the remainder $R$ there vanishes at $(q,p) = (y,\eta)$.  It can then be verified by direct computation that 
	\begin{equation*}
	\Big(\frac 1i \frac{\prtl^2 \Phi}{\prtl z^2}\Big|_{(q,p) = (y,\eta)} \Big)^{-1} = 
		\begin{bmatrix}
			\frac 12(I-Z_t^TY_t^{-*}) & \frac i2(I+Z_t^TY_t^{-*})  \\ 
			B_t^{-1}(Y_t^{-*}-\frac 12 A_t(I-Z_t^TY_t^{-*}))& -iB_t^{-1}(Y_t^{-*}+\frac 12 A_t(I-Z_t^TY_t^{-*}))
		\end{bmatrix}.
	\end{equation*}
	Here $Y_t^{-*}$ denotes the inverse of the adjoint $Y_t^*$.  The desired bound \eqref{invHessPhibd} then follows from \eqref{Yinvbd} and \eqref{YZbd}, with the largest possible contributions coming from the expressions $B_t^{-1}A_t Z_t^T Y_t^{-*}$.
\end{proof}

\begin{remark}
	Lemmas \ref{L:phasereg} and \ref{L:invHessPhibd} suggest that we should apply Theorem \ref{T:spthm} with 
	\begin{equation}\label{parameterdefs}
		\begin{split}
		&\mu = 2\lambda, \nu = \lambda, \text{ and } \sigma = \delta + 2\lambda,\text{ so that }  \\
		&\rho = \delta + 5\lambda \text{ and } 5\mu + 6\sigma + 2\nu = 24\lambda +6\delta.
		\end{split}
	\end{equation}	 
	We will make use of this convention in what follows.  This means the hypothesis $24\lambda + 6 \delta <1$ in Theorem \ref{T:VVthm} implies the hypothesis \eqref{asymptotichyp}.
\end{remark}

We conclude this subsection with some regularity estimates (\eqref{Gammabds} below) needed for an integration by parts argument in \S\ref{SS:HermanKlukApp} using \eqref{transposePhi}. 
Begin with the transpose of \eqref{kappajacobian}
\begin{equation*}
	\begin{bmatrix}
		A_t^T & C_t^T\\
		B_t^T & D_t^T
	\end{bmatrix}.
\end{equation*}
Since this transposed matrix is also symplectic, the matrices 
\begin{equation}\label{siegellemma}
	(A_t+iC_t)^T \text{ and } (B_t+iD_t)^T \text{ are both invertible},
\end{equation}
see \cite[Ch.3, Lemma 21]{CombescureRobertBook}. From this, we define matrices $\tilde Y_t$, $\tilde Z_t$ analogously to \eqref{YZdefs}
\begin{equation*}
	\tilde Y_t := A_t^T+D_t^T +i(C_t^T-B_t^T), \quad \tilde Z_t := A_t^T-D_t^T +i(C_t^T+B_t^T) .
\end{equation*}
As before, we have $\tilde Y_t^* \tilde Y_t = \tilde Z_t^* \tilde Z_t +4I$ so that $ \|\tilde Y_t^{-1} \| \leq 2$ as in \eqref{Yinvbd}.  We also observe that $\tilde Y_t$, $\tilde Z_t$ satisfy the same bounds as in \eqref{YZbd}
\begin{equation*}
		\|\prtl_z^\alpha \tilde Y_t(z)\| + \|\prtl_z^\alpha \tilde Z_t(z)\| \lesssim_\alpha \omega(t)^{|\alpha|+1}  \leq \h^{-\lambda(|\alpha| +1)} .
\end{equation*}
Consequently an induction on  $|\alpha|$, using the identity $0=\prtl_z^{\alpha} (\tilde Y_t \tilde Y_t^{-1})$ for $|\alpha|>0$, gives that
\begin{equation*}
	\|\prtl_z^\alpha (\tilde Y_t^{-1}(z))\|  \lesssim_\alpha \omega(t)^{2|\alpha|} \leq  \h^{-2\lambda|\alpha|} .
\end{equation*}
The Liebniz rule then gives that if $\tilde W_t := \tilde Z_t \tilde Y_t^{-1}$, then
\begin{equation}\label{Wtildebds}
	\|\prtl_z^\alpha \tilde W_t(z)\|  \lesssim_\alpha \omega(t)^{2|\alpha|+1} \leq \h^{-\lambda(2|\alpha|+1)} .
\end{equation}

We now define $\Gamma_t = (B_t+iD_t)^T(A_t+iC_t)^{-T}$, which is well known to lie in the Siegel space of symmetric matrices with positive definite imaginary part (see e.g. \cite[Ch. 3, Lemma 21]{CombescureRobertBook}).  Given \eqref{Phifirstderivs}, we have
\begin{equation}\label{transposePhi}
	(\Gamma_t+iI)^{-1} \Big(\Gamma_t \frac{\prtl \Phi}{\prtl q} - \frac{\prtl \Phi}{\prtl p}\Big) = i(q-y),
\end{equation}
which will be used in an integration by parts argument in \S\ref{SS:HermanKlukApp}.  Indeed, a routine algebraic computation (cf. \cite[Ch. 3, Lemma 23]{CombescureRobertBook}) reveals that $\Gamma_t +iI$ is invertible with
\begin{align*}
	(\Gamma_t +iI)^{-1} &= \frac 1i (A_t+iC_t)^T \tilde Y_t^{-1}= \frac{1}{2i} (I+\tilde W_t),\\
	\Gamma_t(\Gamma_t +iI)^{-1} &= \frac 1i (B_t+iD_t)^T \tilde Y_t^{-1}= \frac{1}{2} (I-\tilde W_t).
\end{align*}
Consequently, \eqref{Wtildebds} gives
\begin{equation}\label{Gammabds}
	\|\prtl_z^\alpha ( (\Gamma_t +iI)^{-1})\| + \|\prtl_z^\alpha ( \Gamma_t (\Gamma_t +iI)^{-1})\| \lesssim_\alpha \omega(t)^{2|\alpha|+1}.
\end{equation}

\subsection{The Herman-Kluk Approximation}\label{SS:HermanKlukApp}
In this section, we review the results of Robert \cite[Theorem 1.4]{RobertHK} which justify the Herman-Kluk approximation for $\U_t$, then examine the effect of composing this with $\widehat{\Theta}$.  The following is a modest restatement of his theorem.
\begin{theorem}[Robert]
	Suppose Assumptions \ref{HamHyp} and \ref{FlowHyp} from \S\ref{SS:VVIntro} are satisfied and that $t$ is such that $\omega(t) \leq \h^{-\lambda}$ with $24\lambda < 1$.  With the notation as in \S\ref{SS:phasereg}, let $\bar{Y}_t$ denote the complex conjugate of the matrix $Y_t$ in \eqref{YZdefs}.
	There exists a sequence of symbols $\{a_j(t,z)\}_{j=0}^\infty$ 
	so that if $\U_{N,t}$ is the operator with integral kernel given by
	\begin{equation}\label{propkernel}
		K_{N,t} (x,y) = \frac{1}{(2\pi\h)^{3d/2}}\int_{T^*\RR^d} e^{\frac i\h \Phi(t,x,y,z)} \det\!^{1/2} \bar{Y}_t(z)\left( \sum_{j=0}^N a_j(t,z)\h^j\right) \,dz
	\end{equation}
	then there exists $\veps_0>0$ such that 
	\begin{equation}
		\|\U_t-\U_{N,t} \|_{L^2(\RR^d) \to L^2(\RR^d)} \lesssim_{N} \h^{\veps_0(N+1)} .
	\end{equation}
	Here $a_0(t,z) = \exp(-i\int_0^tH_1(s,z_s(z))\,ds)$ and the symbols $a_j$ satisfy
	\begin{equation}\label{ajbdslongtime}
		|\prtl_z^\gamma a_j(t,z)| \lesssim_{j,\gamma} \omega(t)^{4j+|\gamma|}.
	\end{equation}
\end{theorem}

\begin{remark}\label{R:HKconv}
	We remark on the harmless differences between the statement here and that in \cite{RobertHK}.  The analysis in \cite{RobertHK} is rooted in the matrix $M_t(z) := -i\bar{Y}_t(z)$, showing that the sequence of $\tilde a_j :=(\det\!^{1/2} \bar{Y}_t )  a_j $ satisfies (cf. (3.24) in that work\footnote{Here we display the effect of including all terms in the asymptotic expansion $H \sim \sum_{j=0}^{\infty} \h^j H_j$, whereas this is treated implicitly in \cite[\S3]{RobertHK}.})
	\begin{equation*}
		\begin{split}
			\prtl_t \tilde a_{0}(t,z) &= \frac 12 \text{tr}\,\big((\prtl_t M_t) M_t^{-1}\big)\tilde a_{0}(t,z) -iH_1(\kappa_t(z))\tilde a_0(t,z)\\
			\prtl_t \tilde a_{j+1}(t,z) &= \frac 12 \text{tr}\,\big((\prtl_t M_t) M_t^{-1}\big)\tilde a_{j+1}(t,z) -iH_{j+1}(\kappa_t(z))\tilde a_{j+1}(t,z) + b_j(t,z), \quad j \geq 0,
		\end{split}
	\end{equation*}
	where $a_j(0,z) \equiv 0$ when $j \geq 1$ and $b_j$ depends on quantities related to $\kappa_t$, $H_0,\dots,H_j$, and $\tilde a_0,\dots,\tilde a_j$.  However, since $\text{tr}\,((\prtl_t M_t) M_t^{-1}) = \text{tr}\,((\prtl_t \bar{Y}_t)\bar{Y}_t^{-1})$, we arrive at equations for $a_j$ through the ansatz $\tilde a_j=(\det\!^{1/2} \bar{Y}_t )a_j$.  Since $\det\!^{1/2} \bar{Y}_0(z) \equiv 2^{\frac d2}$, we have the initial condition $a_0(0,z) = 1$ in light of the identity $2^{-d}(\pi\h)^{-\frac{3d}{2}}\int e^{\frac i\h \Phi(0,x,y,z)} \,dz = \delta(x-y)$.  In particular, we have 
	\begin{equation}\label{a0expression}
		a_0(t,z) = \exp(-i\int_0^t H_1(z_s(z))ds).
	\end{equation}
	We also note that in \cite{RobertHK}, the determinant factors are absorbed into the amplitude, but here we have factored them out, which  makes it easier to examine the effect of applying stationary phase. 
\end{remark}

Given Theorem \ref{HKphasedef}, the Borel lemma furnishes an amplitude\footnote{Here and below, the roles of $\tilde a$ and $\tilde a_j$ will be much different than their role in Remark \ref{R:HKconv}.} $\tilde a(t,z) \sim \sum_{j=0}^\infty \h^ja_j(t,z)$ with $| \prtl_z^\alpha \tilde a(t,z)| \lesssim_\alpha \omega(t)^{4|\alpha|}$  such that if $\tilde\U_t$ is the operator with kernel  
\begin{equation}\label{Ktildeinitialdef}
	\tilde K_t(x,y) = \frac{1}{(2\pi\h)^{3d/2}}\int_{T^*\RR^d} e^{\frac i\h \Phi(t,x,y,z)} \det\!^{1/2} \bar{Y}_t(z)\tilde a(t,z) \,dz
\end{equation}
then $\|\U_t - \tilde{\U}_t\|_{L^2 \to L^2} = O(\h^\infty)$.
Recall that \eqref{Yinvbd}, \eqref{YZbd} imply that 
\begin{equation}\label{Ybarbds}
	\|\bar{Y}_t^{-1}\| \leq 2 \text{ and } \|\prtl^\alpha \bar{Y}_t\| \lesssim_\alpha \h^{-\lambda(|\alpha|+1)} . 
\end{equation}	
Thus by the Jacobi formula (on differentiating determinants), the total amplitude satisfies
\begin{equation*}
	\big| \det\!^{-1/2} \bar{Y}_t(z)\prtl_z^\alpha\big(\det\!^{1/2} \bar{Y}_t(z)\tilde a(t,z)\big)\big| \lesssim_\alpha \omega(t)^{4|\alpha|} \leq \h^{-4\lambda|\alpha|}.
\end{equation*}

We now want to observe the effect of composing $\tilde\U_t$ with $\widehat{\Theta}$.  In preparation, we begin with a lemma which is a variation on \cite[Lemma 3.3]{RobertHK}:
\begin{lemma}\label{IBPLemma}
	Let $\alpha \neq 0$ be any multi-index.  Suppose $b$ is a $C^{|\alpha|}$, integrable function and that  each derivative of order up to $|\alpha|$ is also integrable.  Then
	\begin{equation*}
		\int e^{\frac i\h \Phi(t,x,y,z)} (q-y)^\alpha b(z)\,dz = 
		\sum_{\frac{|\alpha|}{2} \leq j \leq |\alpha|} \h^j \sum_{|\beta| \leq 2j-|\alpha|} 
		\int e^{\frac i\h \Phi(t,x,y,z)} f_{\alpha,j,\beta} (t,z) \prtl_z^\beta b(z) \,dz
	\end{equation*}
	where $j \in \mathbb{N}$ and $f_{\alpha,j,\beta} $ is a collection of functions depending on $\kappa_t$ satisfying
	\begin{equation*}
		|\prtl_z^\gamma f_{\alpha,j,\beta} (t,z)| \lesssim_{\alpha,j,\beta} \omega(t)^{j+2(2j-|\alpha|-|\beta|)+2|\gamma|}.
	\end{equation*}
\end{lemma}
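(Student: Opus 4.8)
The plan is to build the reduction out of the identity \eqref{transposePhi}, which rewrites $q-y$ as a linear combination, with coefficients assembled from $(\Gamma_t+iI)^{-1}$ and $\Gamma_t(\Gamma_t+iI)^{-1}$, of the first derivatives $\frac{\prtl\Phi}{\prtl q},\frac{\prtl\Phi}{\prtl p}$, and then to integrate by parts repeatedly. Concretely, enumerate the components of $z=(q,p)$ as $z_1,\dots,z_{2d}$; \eqref{transposePhi} produces scalar functions $c_k^{(j)}(t,z)$ (each an entry of $\Gamma_t(\Gamma_t+iI)^{-1}$ or of $-(\Gamma_t+iI)^{-1}$) with $i(q-y)_k=\sum_j c_k^{(j)}(t,z)\,\prtl_{z_j}\Phi$, and by \eqref{Gammabds} these satisfy $\|\prtl_z^\gamma c_k^{(j)}\|\lesssim_\gamma \omega(t)^{2|\gamma|+1}$. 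Since $e^{\frac i\h\Phi}\prtl_{z_j}\Phi=\frac\h i\prtl_{z_j}e^{\frac i\h\Phi}$, a single factor of $(q-y)_k$ is traded for one power of $\h$ and one integration by parts:
\[
\int e^{\frac i\h \Phi}(q-y)_k\,B(z)\,dz=\h\sum_j\int e^{\frac i\h\Phi}\,\prtl_{z_j}\!\bigl(c_k^{(j)}B\bigr)\,dz .
\]
The absence of boundary terms is justified, as in \cite[Lemma 3.3]{RobertHK}, from the integrability hypotheses on $B$ together with the Gaussian decay of $e^{\frac i\h\Phi}$ in the $q$-variables coming from $\Im\Phi\geq\frac12|q-y|^2$; note also that the $c_k^{(j)}$ are smooth with derivatives bounded (for fixed $t$) by powers of $\omega(t)$, so multiplying $b$ by such a coefficient or by $q-y$ preserves the hypothesis that $b$ be $C^{|\alpha|}$ and integrable together with its derivatives.

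With this in hand I would argue by induction on $|\alpha|$. For $\alpha\neq0$, choose $k$ with $\alpha_k>0$, write $(q-y)^\alpha b=(q-y)_k\cdot\bigl((q-y)^{\alpha-e_k}b\bigr)$, apply the displayed identity with $B=(q-y)^{\alpha-e_k}b$, and expand $\prtl_{z_j}(c_k^{(j)}B)$ by Leibniz into three kinds of terms according to whether $\prtl_{z_j}$ falls on $c_k^{(j)}$, on $b$, or on the monomial $(q-y)^{\alpha-e_k}$. In the first two cases the monomial still has degree $|\alpha|-1$ and one invokes the inductive hypothesis with multi-index $\alpha-e_k$, with $b$ replaced by $(\prtl_{z_j}c_k^{(j)})b$ or $c_k^{(j)}\prtl_{z_j}b$; in the third case (possible only when $z_j$ is a $q$-component) the monomial drops to degree $|\alpha|-2$ and the inductive hypothesis is applied at size $|\alpha|-2$ with $b$ replaced by a constant multiple of $c_k^{(j)}b$. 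Since every step contributes exactly one extra power of $\h$ beyond what the inductive hypothesis supplies, the number $j$ of powers of $\h$ equals the number of integration-by-parts steps, which ranges between $|\alpha|/2$ (the monomial reduced by $2$ each time) and $|\alpha|$ (reduced by $1$ each time); bookkeeping the ``reduce by $1$'' steps that place a derivative on $b$ gives $|\beta|\leq 2j-|\alpha|$, and the $f_{\alpha,j,\beta}$ emerge as the finite sums of products of derivatives of the $c_k^{(j)}$ against combinatorial constants.

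The remaining task—and the step I expect to be the main (though purely bookkeeping) obstacle—is to confirm that the exponent $j+2(2j-|\alpha|-|\beta|)+2|\gamma|$ is reproduced exactly under each of the three inductive steps. This comes down to combining the inductive bound $|\prtl_z^{\gamma_1}f_{\alpha',j',\beta'}|\lesssim\omega(t)^{\,j'+2(2j'-|\alpha'|-|\beta'|)+2|\gamma_1|}$ with $\|\prtl_z^{\gamma_2+\beta''}c_k^{(j)}\|\lesssim\omega(t)^{\,2(|\gamma_2|+|\beta''|)+1}$ from \eqref{Gammabds}, and using the case-dependent relations $j=j'+1$, $|\alpha'|=|\alpha|-1$ or $|\alpha|-2$, and $|\beta'|=|\beta|+|\beta''|$ or $|\beta|+|\beta''|-1$ (the latter when $\prtl_{z_j}$ hit $b$). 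In every case the exponents telescope precisely to $j+2(2j-|\alpha|-|\beta|)+2|\gamma|$; the base case $|\alpha|=1$ is read off directly from the displayed integration-by-parts formula (only the ``hit $c$'' and ``hit $b$'' terms arise there, both with $j=1$). Apart from this arithmetic, the only genuinely analytic ingredient is the justification of the integrations by parts, handled exactly as in \cite{RobertHK}.
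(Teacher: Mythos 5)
Your proposal is correct and is exactly the paper's argument: the paper's proof consists only of the single sentence ``The proof follows by induction on $|\alpha|$, using the identity \eqref{transposePhi}, integration by parts, and the bounds in \eqref{Gammabds},'' which is precisely the scheme you carry out. The exponent bookkeeping you flag as the remaining task does telescope in each of your three cases (using $j=j'+1$, $|\alpha'|=|\alpha|-1$ or $|\alpha|-2$, and $|\beta'|=|\beta|+|\beta''|$ or $|\beta|+|\beta''|-1$ together with the $\omega(t)^{2|\gamma|+1}$ bound on derivatives of the $c_k^{(j)}$), so no gap remains.
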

\begin{proof}
	The proof follows by induction on $|\alpha|$, using \eqref{transposePhi} to integrate by parts, along with the bounds in  \eqref{Gammabds}.
\end{proof}

The next proposition furnishes the operator and kernel $\V_t,L_t$ desired in Theorem \ref{T:VVthm}.
\begin{proposition}\label{P:composition}
There exists an amplitude $a(t,z)$ satisfying
\begin{equation}\label{finalamplitude}
			\big| \det\!^{-1/2} \bar{Y}_t(z)\prtl_z^\alpha\big(\det\!^{1/2} \bar{Y}_t(z) a(t,z)\big)\big| \lesssim_\alpha\omega(t)^{4|\alpha|} \leq \h^{-4\lambda|\alpha|},\quad			\supp(a(t,\cdot)) \subset \supp(\Theta),
\end{equation}
such that the kernel $L_t(x,y)$ given by 
\begin{equation}\label{Ldef}
	L_t(x,y) = \frac{1}{(2\pi\h)^{3d/2}}\int_{T^*\RR^d} e^{\frac i\h \Phi(t,x,y,z)} \det\!^{1/2} \bar{Y}_t(z) a(t,z) \,dz
\end{equation}
defines an operator $\V_t$ such that 
\begin{equation}\label{lasterror}
	\|\V_t - \tilde\U_t \circ \widehat{\Theta} \|_{L^2 \to L^2} = O(\h^\infty).
\end{equation}
\end{proposition}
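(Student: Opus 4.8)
The plan is to write the integral kernel of $\tilde\U_t\circ\widehat\Theta$ explicitly, observe that composing with the Fourier multiplier $\widehat\Theta$ amounts at leading order to multiplying the Herman--Kluk amplitude $\det^{1/2}M_t\,\tilde a$ by $\Theta(p)$, and then absorb the lower order corrections into a new amplitude using the integration by parts of Lemma~\ref{IBPLemma}. Since all the resulting amplitude contributions involve only derivatives of $\Theta$ evaluated at $p$, they are automatically supported in $\supp(\Theta)$, and the surplus powers of $\h$ produced along the way will dominate the powers of $\omega(t)\leq\h^{-\lambda}$.

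First I would write the kernel of $\tilde\U_t\circ\widehat\Theta$ as $\int\tilde K_t(x,w)\,\Theta_\h(w,y)\,dw$, with $\Theta_\h$ as in \eqref{Thetakernel}, and carry out the inner $w$--integral before the $z$--integral (which renders all integrals absolutely convergent). The only $w$--dependence in $\Phi(t,x,w,z)$ is through $-p\cdot(w-q)+\frac i2|w-q|^2$, so this inner integral is Gaussian; completing the square in $w$ and in the dual variable yields, for each $z=(q,p)$, the factor $e^{\frac i\h(q-y)\cdot p}(2\pi\h)^{-d/2}\!\int\Theta(p+\zeta)e^{-\frac1{2\h}|\zeta|^2}e^{\frac i\h(q-y)\cdot\zeta}\,d\zeta$. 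Expanding $\Theta(p+\zeta)$ in a Taylor polynomial of order $M$ about $\zeta=0$ and computing the Gaussian moments, the polynomial part of this factor equals $e^{\frac i\h(-p\cdot(y-q)+\frac i2|y-q|^2)}$ --- exactly the Gaussian already in $e^{\frac i\h\Phi(t,x,y,z)}$ --- times a finite sum of terms $\h^{k}\prtl^\alpha\Theta(p)(y-q)^\gamma$ with $2k+|\gamma|=|\alpha|$. Thus, modulo the Taylor remainder, the kernel of $\tilde\U_t\circ\widehat\Theta$ is a finite sum of Herman--Kluk integrals $\frac{1}{(2\pi\h)^{3d/2}}\int e^{\frac i\h\Phi}\,(y-q)^\gamma\h^{k}\prtl^\alpha\Theta(p)\det^{1/2}M_t\,\tilde a\;dz$. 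For $\gamma=0$ the amplitude is already of the required type; for $\gamma\neq0$, Lemma~\ref{IBPLemma} (whose integration by parts is justified by the Gaussian factors in $\Phi$) rewrites such an integral as $\frac{1}{(2\pi\h)^{3d/2}}\int e^{\frac i\h\Phi}\det^{1/2}M_t\,(\text{new amplitude})\,dz$, with an extra factor $\h^j$, $j\geq\lceil|\gamma|/2\rceil$, so that the total power of $\h$ in every piece is a nonnegative integer $\geq\lceil|\alpha|/2\rceil$. Because $\prtl^\alpha\Theta$ is supported in $\supp(\Theta)$, and neither the weights $f_{\alpha,j,\beta}$ nor the $\prtl_z$--derivatives in Lemma~\ref{IBPLemma} enlarge supports, every new amplitude is supported in $\supp(\Theta)$; and tracking these powers of $\h$ against the powers of $\omega(t)$ coming from $f_{\alpha,j,\beta}$, from the derivatives of $\det^{1/2}M_t\tilde a$, and from $M_t^{-1}$, and using $\omega(t)\leq\h^{-\lambda}$ with $24\lambda<1$, shows each new amplitude obeys the bounds \eqref{finalamplitude}. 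Grouping by powers of $\h$ and invoking the Borel lemma then produces a single amplitude $a(t,z)$, still supported in $\supp(\Theta)$ and obeying \eqref{finalamplitude}, such that the operator $\V_t$ with kernel \eqref{Ldef} agrees with the sum of all these polynomial--part integrals modulo $O(\h^\infty)$ in $L^2\to L^2$.

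It then remains to bound the contribution of the Taylor remainder $R_M(p,\zeta)=\Theta(p+\zeta)-T_M(p,\zeta)$, which is smooth, compactly supported in $\zeta$, and vanishing to order $M$ at $\zeta=0$. After rescaling $\zeta=\sqrt\h\,\omega$ its contribution to the factor above takes the form $e^{\frac i\h(q-y)\cdot p}\h^{M/2}W_\h(z,(y-q)/\sqrt\h)$, where $W_\h(z,\cdot)$ is a family of Schwartz functions with seminorms bounded uniformly in $\h,z$ and $O(\h^\infty)$ once $\mathrm{dist}(p,\supp\Theta)\gtrsim1$. Hence $\tilde\U_t\circ\widehat\Theta-\V_t$ has kernel $\h^{M/2}\!\int\psi_{t,z}(x)\,\overline{\phi_z(y)}\,dz$, where $\psi_{t,z}$ is a multiple, of modulus $\lesssim\h^{-5d/4}\omega(t)^{d/2}$, of the normalized coherent state at $\kappa_t(z)$ --- using that $e^{\frac i\h\Phi(t,x,y,z)}$ is, as a function of $x$, a Gaussian at position $q_t(z)$ and momentum $p_t(z)$ --- and $\phi_z$ is a wave packet of $L^2$--norm $\lesssim\h^{d/4}$ localized in phase space within $O(\h^{1/2})$ of $z$ with rapidly decaying tails. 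Estimating $\|(\tilde\U_t\circ\widehat\Theta-\V_t)(\tilde\U_t\circ\widehat\Theta-\V_t)^*\|$ by $\h^{M}\iint|\langle\phi_z,\phi_{z'}\rangle|\,\|\psi_{t,z}\|\,\|\psi_{t,z'}\|\,dz\,dz'$, using $|\langle\phi_z,\phi_{z'}\rangle|\lesssim_K\h^{d/2}(1+|z-z'|/\sqrt\h)^{-K}$ to perform one integration, and using the coherent--state resolution of the identity $(2\pi\h)^{-d}\!\int|g_w\rangle\langle g_w|\,dw=I$ together with the invariance of Lebesgue measure under $\kappa_t$ to control the resulting quadratic form, gives $\|\tilde\U_t\circ\widehat\Theta-\V_t\|_{L^2\to L^2}\lesssim\h^{M/2}\omega(t)^{d/2}\leq\h^{M/2-\lambda d/2}$; since $M$ is arbitrary, this is \eqref{lasterror}.

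The main obstacle is this last step: a pointwise bound on the remainder kernel does not by itself yield operator--norm smallness, and one must exploit the phase--space localization of the frozen--Gaussian superposition --- the same mechanism underlying the $L^2$ bounds in Robert's Herman--Kluk theorem, cf.\ \cite{RobertHK} and \cite{CombescureRobertBook}. A secondary point is to order the $w$-- and $z$--integrations so that convergence and the applicability of Lemma~\ref{IBPLemma} are not in question.
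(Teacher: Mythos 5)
Your proof follows the same main strategy as the paper's: write the kernel of $\tilde\U_t\circ\widehat\Theta$ explicitly, perform the Gaussian integrals in the auxiliary position and momentum variables to produce the factor $J(z,y)$, Taylor expand $\Theta$ about the base momentum $p$, use Lemma~\ref{IBPLemma} to convert the resulting powers of $(y-q)$ into admissible amplitude terms (each carrying a surplus power of $\h$), track supports and $\omega(t)$-growth, and resum with the Borel lemma. All of that matches the paper step by step.

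The point of genuine divergence is the treatment of the Taylor remainder. The paper obtains the pointwise bound \eqref{taylorerrorcontrib}, namely $|R_{N,t}(x,y)| \lesssim_{N'} \h^{\frac N2-c_d}\int (1+\h^{-1/2}(|x-q_t(q,p)|+|y-q|)+|p|)^{-N'}\,dqdp$, and then simply invokes Young's inequality: since the integrand already encodes rapid off-diagonal decay of the kernel in both $x$ and $y$ (through the $|x-q_t|$ and $|y-q|$ factors, with the $|p|$ factor giving integrability in $q,p$), the Schur-test sums are finite and carry the surplus $\h^{N/2}$. Your instinct that ``a pointwise bound on the remainder kernel does not by itself yield operator-norm smallness'' is correct in the abstract (a uniform $\sup$ bound would not suffice), but the paper's pointwise estimate is not of that crude type --- it already carries the needed localization, so the lighter argument goes through. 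You instead go further and run a $TT^*$/coherent-state argument: decompose the remainder as a superposition $\int |\psi_{t,z}\rangle\langle\phi_z|\,dz$ of frozen-Gaussian bra-kets, use the almost-orthogonality $|\langle\phi_z,\phi_{z'}\rangle|\lesssim\h^{d/2}(1+|z-z'|/\sqrt\h)^{-K}$ together with the resolution of the identity and the measure-preservation of $\kappa_t$ to close the quadratic form. This is correct (your constants come out as $\h^{M/2-d/4}\omega(t)^{d/2}$ rather than $\h^{M/2}\omega(t)^{d/2}$, which is immaterial for the $O(\h^\infty)$ conclusion), and it is arguably more robust --- it would survive a loss of off-diagonal decay in the pointwise estimate --- but it is more machinery than the proposition requires. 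The two routes buy different things: the paper's Young-inequality step is shorter and leans on the decay already baked into \eqref{taylorerror}, while yours isolates the phase-space-localization mechanism (the one behind Robert's $L^2$ bounds) in a way that would generalize more easily if $\Theta$ were replaced by a less well-localized multiplier.

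One small correction to your bookkeeping: when you assert the $\prtl_z$-derivatives in Lemma~\ref{IBPLemma} ``do not enlarge supports,'' remember they act on the full $b(z)$, which contains the factor $\prtl^\alpha\Theta(p)$; that factor (and all its $z$-derivatives) is indeed supported in $\supp(\Theta)$, so the conclusion stands, but it is worth saying explicitly since the Herman--Kluk amplitude $\tilde a(t,z)$ itself has no compact $p$-support.
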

\begin{proof}
The kernel of $\widehat{\Theta}$ is given by 
\begin{equation}\label{Thetakernel}
	\frac{1}{(2\pi\h)^d} \int e^{\frac i\h (v-y)\cdot \xi} \Theta(\xi)\,d\xi.
\end{equation}
Hence the kernel of the composition $\tilde \U_t \circ \widehat{\Theta}$ can be written 
\begin{multline}\label{kernelcomp}
	\frac{1}{(2\pi\h)^{5d/2}}\int_{\RR^{2d}} \int_{\RR^{d}} \int_{\RR^{d}} e^{\frac i\h (\Phi(t,x,v,z)  +(v-y)\cdot \xi)} 
	\det\!^{1/2} \bar{Y}_t(z)\tilde{a}(t,z) \Theta(\xi)\,dzd\xi dv\\
	=\frac{1}{(2\pi\h)^{3d/2}}\int_{\RR^{2d}}  e^{\frac i\h (S(t,z) + p_t\cdot(x-q_t) +\frac i2|x-q_t|^2)} 
	\det\!^{1/2} \bar{Y}_t(z)\tilde{a}(t,z) J(z,y)dz,
\end{multline}
where 
\begin{equation*}
	J(z,y) := 	\frac{1}{(2\pi\h)^{d}} \iint e^{\frac i\h (p\cdot(q-v) +\frac i2|v-q|^2 +(v-y)\cdot \xi)}\Theta (\xi)\,d\xi dv.
\end{equation*}

To further analyze $J$, we translate variables $(v,\xi) \mapsto (v+q,\xi+p)$, so that Gaussian integration in $v$ gives:
\begin{equation*}
			J(z,y) = 	\frac{e^{\frac i\h p\cdot(q-y)} }{(2\pi\h)^{d}} \iint e^{\frac i\h ( (v +q -y)\cdot \xi +\frac i2|v|^2 )}\Theta(\xi+p)\,dv d\xi 
				 = 	\frac{e^{\frac i\h p\cdot(q-y)}}{(2\pi\h)^{d/2}} \int e^{\frac i\h (q-y)\cdot \xi -\frac{1}{2\h}|\xi|^2}\Theta(\xi+p) \,d\xi .
\end{equation*}
Next, we take a Taylor expansion of $\Theta$
\begin{equation*}
	\Theta(\xi+p) = \sum_{|\alpha| <N} \frac{\xi^\alpha}{\alpha!} \prtl^\alpha \Theta(p) + \sum_{|\alpha| = N} \frac{\xi^\alpha}{\alpha!} \int_0^1 N(1-s)^{N-1}\prtl^\alpha \Theta(s\xi+p)\,ds
\end{equation*}
By induction, $D_{\tilde v}^\alpha e^{-|\tilde v|^2/2} = P_\alpha(\tilde v)e^{-|\tilde v|^2/2}$ for some polynomial $P_\alpha$ of the same parity and degree as $|\alpha|$ (and $P_0(\tilde v) \equiv 1$).  Applying this with $\h^{1/2}\tilde v = v-q$, the chain rule implies that the contribution of the sum over $|\alpha|<N$ to $J(z,y)$ is thus
\begin{equation}\label{taylorleading}
	e^{\frac i\h p\cdot(q-y)}\sum_{|\alpha| <N} \frac{\prtl^\alpha \Theta(p) }{\alpha!} (\h D_v)^\alpha e^{-\frac{|v-q|^2}{2\h}}\Big|_{v=y} = e^{\frac i\h p\cdot(q-y)-\frac{|y-q|^2}{2\h}} \sum_{|\alpha| <N} \frac{\prtl^\alpha \Theta(p) }{\alpha!} \h^{\frac{|\alpha|}{2}} P_\alpha \Big(\frac{y-q}{\sqrt{\h}}\Big) 
\end{equation}
We also observe that for any $s \in (0,1)$, 
\begin{equation}\label{taylorerror}
	\Big| \frac{1}{(2\pi \h)^{d/2}} \int e^{\frac i\h (q-y)\cdot \xi -\frac{1}{2\h}|\xi|^2} \xi^\alpha \prtl^\alpha \Theta(s\xi+p) \,d\xi \Big| \lesssim_{N'} \h^{\frac{|\alpha|}{2}}(1+\h^{-\frac 12}|q-y| + |p|)^{-N'}
\end{equation}
where the decay in $|p|$ results from the compact support of $\Theta$.   Hence the contribution of the Taylor remainder to \eqref{kernelcomp} results in a kernel $R_{N,t}(x,y)$ satisfying
\begin{equation}\label{taylorerrorcontrib}
	|R_{N,t}(x,y)| \lesssim_{N'} \h^{\frac{N}{2}-c_d}\int_{\RR^{2d}} \big(1+\h^{-\frac 12}(|x-q_t(q,p)| + |y-q|) +|p| \big)^{-N'}\,dq dp,
\end{equation}
for some $c_d$ depending only on $d$. Hence by Young's inequality, the larger we take $N$ in the Taylor expansion the smaller the error generated by $R_{N,t}$ as an operator on $L^2(\RR^d)$.

The contribution of \eqref{taylorleading} to the kernel of $\tilde\U_t \circ \widehat{\Theta}$ is thus
\begin{equation*}
 \frac{1}{(2\pi\h)^{3d/2}}	\sum_{|\alpha| <N} \frac{\h^{|\alpha|/2}}{\alpha! }\int_{\RR^{2d}}  e^{\frac i\h \Phi(t,x,y,z)} \det\!^{1/2} \bar{Y}_t(z)\tilde{a}(t,z) \prtl^\alpha \Theta(p) P_\alpha \Big(\frac{y-q}{\sqrt{\h}}\Big) dz.
\end{equation*}
Repeated use of Lemma \ref{IBPLemma} and \eqref{taylorerrorcontrib} then gives a sequence of amplitudes $\{\tilde a_j(t,z)\}_{j=0}^\infty$ so that if $L_{N,t}(x,y)$ is the kernel
\begin{equation*}
	L_{N,t}(x,y) := \frac{1}{(2\pi\h)^{3d/2}}	 \int e^{\frac i\h \Phi(t,x,y,z)} \det\!^{1/2} \bar{Y}_t(z)\Big(\sum_{j=0}^{N} \h^j\tilde{a}_j(t,z) \Big)dz,
\end{equation*}
then the corresponding operator $\V_{N,t}$ satisfies $\| \V_{N,t} -\tilde \U_t \circ \widehat{\Theta} \|_{L^2 \to L^2} \lesssim_N \h^{\veps_1 (N+1)}$ for some $\veps_1>0$.  One more application of the Borel lemma now furnishes an amplitude $a(t,z)$ so that \eqref{Ldef} satisfies the desired properties \eqref{finalamplitude} and \eqref{lasterror}.

We conclude by observing that since $P_0 \equiv 1$, we have $a(t,z) - \Theta(p)a_0(t,z) = O(\h^{1-5\lambda})$  where  $a_0$ is in \eqref{a0expression}.  Given this, the estimate \eqref{vvleading} above can be seen to follow from the arguments below since $\Lsc_{0,\h} \equiv 1$ in Theorem \ref{T:spthm}.
\end{proof}

\subsection{Non-stationary points in the Herman-Kluk approximation}
In this section, we begin our analysis of the oscillatory integral in \eqref{Ldef}, showing that the contribution of the integral away from points $\eta \in \widetilde{\Xi}(x,y)$ is $O(\h^\infty)$.  This reduces Theorem \ref{T:VVthm} to an application of the stationary phase asymptotics in Theorem \ref{T:spthm}. We begin with quantitative version of the inverse function theorem, the only place in this work where we allow the variables $q,p$ to play different roles.
\begin{lemma}\label{L:InvFT}
	Suppose $F: \RR^{d} \supset B(p_0,r) \to \RR^{d}$ is a $C^2$ map and there exists $0<R_1,R_2<\infty$ such that $\|(\frac{\prtl F}{\prtl p}(p_0))^{-1}\|\leq R_1^{-1}$ and that $\sup_{p \in B(p_0,r),|\alpha|=2}|\prtl^\alpha F(p)| \leq R_2^{-1}$.  Then there exists $c_\d>0$ depending only on $\d$ such that $F$ is injective on any ball of radius $r_0$ satisfying 
	\begin{equation*}
		0<r_0 < \min(c_\d R_1R_2,r).
	\end{equation*}  Moreover, the image $F(B(p_0,r_0))$ contains a ball about $F(p_0)$ of radius $\frac 12 r_0 R_1$.
\end{lemma}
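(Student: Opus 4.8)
The plan is to run the classical contraction-mapping proof of the inverse function theorem while keeping every constant dependent on $\d$ alone. First I would normalize: translating in both the domain and the target we may assume $p_0=0$ and $F(p_0)=0$, and, writing $L:=\frac{\prtl F}{\prtl p}(0)$, we may replace $F$ by $G:=L^{-1}F$, a $C^2$ map on $B(0,r)$ with $G(0)=0$ and $\frac{\prtl G}{\prtl p}(0)=I$. Since each entry of $L^{-1}$ is bounded by $\|L^{-1}\|\le R_1^{-1}$ and each second-order partial of $F$ is bounded by $R_2^{-1}$ on $B(0,r)$, the chain rule turns these two hypotheses into a single operator-norm bound $\big\|\frac{\prtl^2 G}{\prtl p^2}(p)\big\|\le C_\d R_1^{-1}R_2^{-1}$ on $B(0,r)$, with $C_\d$ depending only on $\d$. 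Taking $c_\d:=(2C_\d)^{-1}$, the hypothesis $r_0\le c_\d R_1R_2$ combined with the fundamental theorem of calculus along segments through $0$ gives
\[
\Big\|\tfrac{\prtl G}{\prtl p}(p)-I\Big\|\le C_\d R_1^{-1}R_2^{-1}\,r_0\le\tfrac12,\qquad p\in B(0,r_0),
\]
where $r_0\le r$ guarantees $B(0,r_0)\subset B(0,r)$.

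\emph{Injectivity.} For distinct $p,p'\in B(0,r_0)$ I would write $G(p)-G(p')=(p-p')+\int_0^1\big(\frac{\prtl G}{\prtl p}(p'+t(p-p'))-I\big)(p-p')\,dt$; since the segment lies in the convex set $B(0,r_0)$, the integral term has norm at most $\tfrac12|p-p'|$, so $|G(p)-G(p')|\ge\tfrac12|p-p'|>0$, and $F=LG$ is injective as $L$ is invertible. I would also record the bi-Lipschitz lower bound $|F(p)-F(p')|\ge\|L^{-1}\|^{-1}|G(p)-G(p')|\ge\tfrac{R_1}{2}|p-p'|$, which is convenient downstream.

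\emph{The image.} It suffices to prove $G(B(0,r_0))\supset B(0,\tfrac12 r_0)$: because $|Lv|\ge\|L^{-1}\|^{-1}|v|\ge R_1|v|$, the linear isomorphism $L$ sends $B(0,\tfrac12 r_0)$ to a set containing $B(0,\tfrac12 r_0R_1)$, whence $F(B(0,r_0))=L\big(G(B(0,r_0))\big)\supset B(0,\tfrac12 r_0R_1)$, and undoing the translations recenters this ball at $F(p_0)$. To solve $G(p)=w$ for a given $w$ with $0<|w|<\tfrac12 r_0$, I would apply the contraction mapping principle to $T(p):=w+p-G(p)$ on the closed ball $\overline{B(0,2|w|)}\subset B(0,r_0)$: one has $\frac{\prtl T}{\prtl p}=I-\frac{\prtl G}{\prtl p}$ of norm $\le\tfrac12$ there, and, using $G(0)=0$, $|p-G(p)|=\big|\int_0^1\big(I-\frac{\prtl G}{\prtl p}(tp)\big)p\,dt\big|\le\tfrac12|p|$, so $|T(p)|\le|w|+\tfrac12|p|\le 2|w|$; hence $T$ maps this complete metric space into itself and has a fixed point $p^\ast$, i.e.\ $G(p^\ast)=w$ with $|p^\ast|<r_0$. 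The case $w=0$ is trivial, so $B(0,\tfrac12 r_0)\subset G(B(0,r_0))$, as required.

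None of these steps is deep, and I expect the only genuine obstacle to be the constant-tracking in the first paragraph: one must check that the passage from the entrywise hypothesis on $\prtl^2 F$ and the operator-norm hypothesis on $(\prtl F/\prtl p)^{-1}$ to a single operator-norm bound on $\prtl^2 G$ costs only a factor $C_\d$ depending on $\d$ alone, and that the resulting threshold $c_\d=(2C_\d)^{-1}$, as well as every constant used afterwards, is honestly independent of $R_1$, $R_2$, and $r$. Everything else is the standard Newton-iteration argument, carried out on balls rather than at a point.
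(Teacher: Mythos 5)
Your proof is correct and follows essentially the same route as the paper: both run the contraction-mapping (Newton iteration) proof of the inverse function theorem with explicit constant-tracking, the key contraction being your $T(p)=w+p-G(p)$, which after undoing your normalization by $L^{-1}$ is exactly the map $\varphi_q(p)=p+(\frac{\prtl F}{\prtl p}(p_0))^{-1}(q-F(p))$ used in the paper (citing Rudin, Theorem 9.24). The only differences are presentational — you normalize to $G=L^{-1}F$ up front and read off injectivity directly from the mean-value inequality rather than from uniqueness of fixed points — and your bookkeeping that the entrywise second-derivative hypothesis converts to an operator-norm bound at the cost of a dimensional constant is precisely what the paper's $C_d$ is doing.
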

\begin{proof}
	The lemma follows from the contraction mapping approach to the inverse function theorem in \cite[Theorem 9.24]{RudinPrinciples}; we merely outline the main steps in the proof here.  For $q \in \RR^d$, define the mapping $\varphi_q(p) :=  p + (\frac{\prtl F}{\prtl p}(p_0))^{-1}(q-F(p))$. The solutions of $F(p)=q$ are identical to the fixed points of $\varphi_q$.  Moreover, if $0<r_0 < r$ and
	\begin{equation}\label{contractive}
		\Big\|\Big(\frac{\prtl F}{\prtl p}(p_0)\Big)^{-1}\Big\|
		\Big(\sup_{p \in B(p_0,r_0)}\Big\|\frac{\prtl F}{\prtl p}(p) - \frac{\prtl F}{\prtl p}(p_0)\Big\|\Big)
		\leq \frac 12,
	\end{equation}
	then $\varphi_q$ is a contraction on $\overline{B(p_0,r_0)}$, meaning any fixed points $p \in \overline{B(p_0,r_0)}$ must be unique.  It follows that $F$ is injective on $\overline{B(p_0,r_0)}$.  Moreover, \eqref{contractive} gives that the image $F(\overline{B(p_0,r_0)})$ contains a ball of radius $r_0(2 \|(\frac{\prtl F}{\prtl p}(p_0))^{-1}\| )^{-1}$ about $F(p_0)$, which is larger than the concentric ball of  radius $\frac 12 r_0 R_1$.  Indeed, if $q$ lies in this ball, it is seen that $\varphi_q$ maps the closed ball $\overline{B(p_0,r_0)}$ to itself, at which point the claim follows from the contraction mapping fixed point theorem.
		
	Since $F$ is $C^2$, there exists a constant $C_\d$ depending only $\d$ such that \eqref{contractive} follows if $r_0$ satisfies
	\begin{equation*}
		C_\d \,r_0\, \Big\|\Big(\frac{\prtl F}{\prtl p}(p_0)\Big)^{-1}\Big\|
		\Big(\sup_{p \in B(p_0,r_0),|\alpha|=2}|\prtl^\alpha F(p)|\Big)\leq \frac 12,
	\end{equation*}
	which in turn follows from $C_d r_0 R_1^{-1}  R_2^{-1} \leq \frac 12$.  Setting $c_d = (2C_d)^{-1}$ thus establishes the result.
\end{proof}

Recall from \eqref{parameterdefs} that we have set $\mu = 2\lambda$, $\nu = \lambda$, $\sigma = \delta + 2\lambda$, $\rho = \delta + 5\lambda$. 
We will observe two consequences of Lemma \ref{L:InvFT} with $F(p) = q_t(y,p)$, $R_1 := c\h^{\sigma}$, $R_2 := c\h^{2\lambda}$, and $1/c$ sufficiently large.  The first observation is that the preimage
\begin{equation}\label{preimage}
	\tilde \Xi_t(x,y) = \{p \in \supp(\Theta)  + B(0,\h^\rho): x= q_t(y,p)\}
\end{equation}
is a finite, discrete set, with cardinality  $O(\h^{-(\sigma+2\lambda)\d})$.  Moreover, if $\eta,\tilde \eta \in \tilde \Xi_t(x,y) $ are distinct elements, we have $|\eta -\tilde \eta| \gtrsim h^{\sigma+2\lambda } \geq \h^\rho$.

Next. since $\rho > \delta+4\lambda$, the second part of Lemma \ref{L:InvFT} then shows that if $p\in  \supp(\Theta)$ is such that $|p-\eta| \geq \h^{\rho}$ for all $\eta \in \tilde \Xi_t(x,y)$, then
for $\h$ small enough,
\begin{equation}\label{pdistance}
 	\frac 12\h^{\rho + \delta +2\lambda } \leq |x-q_t(y,p)|.
\end{equation}
Indeed, if we had $ |x-q_t(y,p)| < \frac 12 \h^{\rho + \delta +2\lambda }$, then Lemma \ref{L:InvFT} would furnish $\eta \in B(p,\h^\rho)$ such that $x = q_t(y,\eta)$, contradicting that $p$ is at a distance of at least $\h^\rho$ to the set in \eqref{preimage}.  Moreover, \eqref{pdistance} implies that 
\begin{equation}\label{pdistance2}
\frac 14 \h^{2(\rho + \delta +2\lambda )} \leq |x-q_t(q,p)|^2 + |q_t(y,p)-q_t(q,p)|^2 \lesssim |x-q_t(q,p)|^2 + \h^{-2\lambda}|y-q|^2
\end{equation}

Let $\chi$ be a bump function identically one on the ball of radius $1/2$ about 0 in $\RR^d$ and supported in the ball of radius 1 about 0. For each $\eta \in \tilde \Xi_t(x,y)$, we now define $\psi_\eta(q,p) = \chi(\h^{\rho}(p-\eta))\chi(\h^\rho(y-q))$ and $\psi_0(q,p) = (1- \sum_{\eta}\psi_\eta(q,p))$.  We now claim that if $(q,p) \in \supp(\psi_0)$, then
\begin{equation}\label{psi0supp}
 	\h^{2(\rho + \delta +3\lambda )} \lesssim |x-q_t(q,p)|^2 + |y-q|^2.
\end{equation}
Indeed, \eqref{pdistance2} shows there exists $\veps_1>0$ sufficiently small such that if $|y-q| < \veps_1\h^{\rho+\delta+3\lambda}$, then $|x-q_t(q,p)|^2 \gtrsim \h^{2(\rho + \delta +3\lambda )}$.  If $|y-q|\geq \veps_1\h^{\rho+\delta+3\lambda}$ instead, then the desired inequality is trivial.

The hypothesis $6\delta + 24\lambda <1$ in Theorem \ref{T:VVthm} means that
$
	2(\rho+\delta+3\lambda) = 4\delta+16\lambda  <1.
$
Since the bound \eqref{psi0supp} implies that $\Im\Phi \geq \h^{2(\rho + \delta +3\lambda )}$, we have
\begin{equation*}
	\frac{1}{(2\pi\h)^{3d/2}}\int_{T^*\RR^d} e^{\frac i\h \Phi(t,x,y,z)} \det\!^{1/2} \bar{Y}_t(z)a(t,z) \psi_0(z) \,dz = O(\h^\infty).
\end{equation*}

Theorem \ref{T:VVthm} now follows from the following result, which we prove in \S\ref{SS:SPandVVcon} below.
\begin{theorem}\label{T:locasymptotic} Given $\eta \in \widetilde{\Xi}_t (x,y)$, set
	\begin{equation*}
		L_{\eta,t} (x,y) := \int_{T^*\RR^d} e^{\frac i\h \Phi(t,x,y,z)} \det\!^{1/2} \bar{Y}_t(z)a(t,z) \psi_\eta(z) \,dz.
	\end{equation*}
There exists $\theta_\eta \in \mathbb{Z}$ and a sequence $\{b_k(y,\eta)\}_{k=1}^\infty$ such that for any integer $N \geq 1$
	\begin{equation}\label{spetaloc}
		\left| L_{\eta,t} (x,y) -
		(2\pi\h)^{d} e^{\frac i\h S_\eta-i\frac{\pi}{2} \theta_\eta }  \Big|\det \frac{\prtl^2 S_\eta}{\prtl x \prtl y}\Big|^{1/2}\sum_{k=0}^{N-1} \h^k b_{k}(y,\eta)\right| 
		 \lesssim \h^{N(1-3\delta-12\lambda)+\d}.
	\end{equation}
\end{theorem}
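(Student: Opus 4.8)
The plan is to read $L_{\eta,t}(x,y)$ as an oscillatory integral over $\RR^{2d}\cong T^*\RR^d$ to which Theorem \ref{T:spthm} applies verbatim, with the parameters $\mu=2\lambda$, $\nu=\lambda$, $\sigma=\delta+2\lambda$, $\rho=\delta+5\lambda$ of \eqref{parameterdefs}, and then to match the resulting expansion with the Van Vleck form. First I would translate the integration variable so that $(q,p)=(y,\eta)$ sits at the origin, take the phase to be $z\mapsto\Phi(t,x,y,z)$ and the amplitude to be $z\mapsto\det\!^{1/2}M_t(z)\,a(t,z)\,\psi_\eta(z)$, rescaled by a harmless fixed power of $\h$ so that it is $O(1)$ and with the cutoff $\psi_\eta$ arranged to be supported in a ball of radius $\veps\h^\rho$ about $(y,\eta)$ as in \eqref{amplitudehyp}; neither adjustment affects the earlier reductions nor the powers of $\h$ below. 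Since $\eta\in\widetilde{\Xi}_t(x,y)$ forces $q_t(y,\eta)=x$, formula \eqref{Phifirstderivs} shows that $\frac{\prtl\Phi}{\prtl q}$ and $\frac{\prtl\Phi}{\prtl p}$ vanish at $(y,\eta)$, while $\Im\Phi(t,x,y,z)=\tfrac{1}{2}\big(|x-q_t(z)|^2+|y-q|^2\big)\geq0$ with equality there; this is \eqref{minimalsphyp}. On $\supp(\psi_\eta)$ we have $|q-y|,|p-\eta|\le\h^\rho$, so \eqref{ABCDbds} gives $|q_t(q,p)-x|=|q_t(q,p)-q_t(y,\eta)|\lesssim\h^{\rho-\lambda}=\h^{\delta+4\lambda}<1$ for small $\h$, and Lemma \ref{L:phasereg} applies on the support.

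I would then verify the quantitative hypotheses. Lemma \ref{L:phasereg} gives $|\prtl_z^\alpha\Phi|\lesssim_\alpha\h^{-\lambda|\alpha|}=\h^{-\mu-\nu(|\alpha|-2)}$ for $|\alpha|\ge2$, and Lemma \ref{L:invHessPhibd} gives $\big\|\big(\tfrac{\prtl^2\Phi}{\prtl z^2}\big|_{(y,\eta)}\big)^{-1}\big\|\lesssim\h^{-\delta-2\lambda}=\h^{-\sigma}$; together these are \eqref{phihyp}. Then $\rho=\mu+\nu+\sigma$ is consistent with \eqref{rhodef} and the value of $\rho$ in Assumption \ref{PtsHyp}, and \eqref{asymptotichyp} becomes $1>5\mu+6\sigma+2\nu=24\lambda+6\delta$, the standing hypothesis of Theorem \ref{T:VVthm}. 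The amplitude bounds \eqref{amplitudehyp} follow from \eqref{finalamplitude}, the Leibniz rule, $|\prtl_z^\alpha\psi_\eta|\lesssim_\alpha\h^{-\rho|\alpha|}$, and $\rho>4\lambda$. Theorem \ref{T:spthm} in dimension $2d$ then yields, for each $N\ge1$,
\begin{equation*}
L_{\eta,t}(x,y)=\frac{(2\pi\h)^{d}\,e^{\frac i\h\Phi(t,x,y,y,\eta)}}{\det\!^{1/2}\!\big(\tfrac{1}{i}\,\tfrac{\prtl^2\Phi}{\prtl z^2}\big|_{(y,\eta)}\big)}\sum_{j=0}^{N-1}\h^j\Lsc_{j,\h}\!\big(\det\!^{1/2}M_t\,a(t,\cdot)\big)\big|_{(y,\eta)}+O\!\big(\h^{N(1-5\mu-6\sigma-2\nu)+2\rho d}\big),
\end{equation*}
where I have used $\psi_\eta\equiv1$ near $(y,\eta)$ to drop it from the coefficients; by \eqref{parameterdefs} the error exponent equals $N(1-24\lambda-6\delta)+2\rho d\ge N(1-24\lambda-6\delta)$, which gives the bound in \eqref{spetaloc}.

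It remains to identify the leading factors. Since $q_t(y,\eta)=x$ and $q=y$ at the critical point, every term of \eqref{HKphasedef} other than $S(t,q,p)$ vanishes, so $\Phi(t,x,y,y,\eta)=S(t,y,\eta)=S_\eta$. The $j=0$ coefficient is $\det\!^{1/2}M_t(y,\eta)\,a(t,y,\eta)$, and the $j\ge1$ coefficients are smooth functions of $(y,\eta)$ assembled from finitely many $z$-derivatives of $M_t$, $a$, of $\big(\tfrac{\prtl^2\Phi}{\prtl z^2}\big|_{(y,\eta)}\big)^{-1}$, and of $g_\h$ as in \eqref{Ljdef}--\eqref{gdef}. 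Using \eqref{HessianHK} (its remainder vanishing at $(y,\eta)$ since $\eta\in\widetilde{\Xi}_t(x,y)$), the symplectic identities \eqref{symplecticID}, and the Hamilton--Jacobi relations $\prtl_x S_\eta=p_t(y,\eta)$, $\prtl_y S_\eta=-\eta$ --- valid because Assumption \ref{PtsHyp} lets $x=q_t(y,p)$ solve locally for $p$ near $\eta$, whence $\frac{\prtl^2 S_\eta}{\prtl x\prtl y}=-\big(\tfrac{\prtl q_t}{\prtl p}(y,\eta)\big)^{-1}$ --- one computes that the quotient $\det\!^{1/2}M_t(y,\eta)\big/\det\!^{1/2}\!\big(\tfrac{1}{i}\tfrac{\prtl^2\Phi}{\prtl z^2}\big|_{(y,\eta)}\big)$ has modulus $\big|\det\tfrac{\prtl^2 S_\eta}{\prtl x\prtl y}\big|^{1/2}$ up to a universal constant, while its argument, obtained by continuing the branch of the square root of the complex symmetric matrix $\tfrac{1}{i}\tfrac{\prtl^2\Phi}{\prtl z^2}\big|_{(y,\eta)}$ from its value at $t=0$, supplies the factor $i^{-d/2}e^{-i\frac{\pi}{2}\theta_\eta}$ with $\theta_\eta\in\mathbb{Z}$ the Maslov index of the path. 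Absorbing the remaining constants and the higher $\Lsc_{j,\h}$ corrections into a sequence $\{b_k(y,\eta)\}_{k\ge0}$ then gives \eqref{spetaloc}.

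The routine content here is the verification of the hypotheses of Theorem \ref{T:spthm}; the substantive step, carried out in \S\ref{SS:SPandVVcon}, is the last one --- the symplectic linear-algebra identity connecting the $2d\times2d$ complex Hessian determinant of the Herman--Kluk phase to the $d\times d$ determinant $\det\frac{\prtl^2 S_\eta}{\prtl x\prtl y}$, together with the continuity argument that pins the branch of the complex square root to the integer-valued Maslov index $\theta_\eta$. I also expect that some care is needed to check that the $b_k(y,\eta)$ are smooth in $(y,\eta)$ with bounds that are uniform in the parameters at play, since that smoothness is what makes the finite sum over $\eta\in\widetilde{\Xi}_t(x,y)$ in Theorem \ref{T:VVthm} a bona fide asymptotic expansion.
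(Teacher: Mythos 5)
Your proposal follows essentially the same route as the paper: translate so that $(y,\eta)$ sits at the origin, verify the hypotheses of Theorem~\ref{T:spthm} in dimension $2d$ with the parameter assignments of \eqref{parameterdefs} (using Lemma~\ref{L:phasereg} for the derivative bounds, Lemma~\ref{L:invHessPhibd} for the inverse Hessian bound, and \eqref{finalamplitude} together with the cutoff for the amplitude bounds), and then match the leading factors by (i) observing that $\Phi$ reduces to $S_\eta$ at the critical point, (ii) the factorization identity $\det\bigl(\tfrac1i\tfrac{\prtl^2\Phi}{\prtl z^2}\big|_{(y,\eta)}\bigr)=\det(-M_t)\det(B_t)$, (iii) the Hamilton--Jacobi relation $\tfrac{\prtl^2 S_\eta}{\prtl x\prtl y}=-B_t^{-1}$ from \cite[Lemma 3.8]{BilyRobert}, and (iv) a continuity-of-branch argument for the square roots producing the Maslov index $\theta_\eta$. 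This is exactly the content of \S\ref{SS:SPandVVcon}, and your verification of \eqref{phihyp}, \eqref{rhodef}, \eqref{asymptotichyp} is accurate (in particular $5\mu+6\sigma+2\nu=24\lambda+6\delta$, which also surfaces the benign typo in the exponent of \eqref{spetaloc}, where the roles of $\delta$ and $\lambda$ are transposed relative to \eqref{VVmaintheorem}). Your closing caveat about smoothness of the $b_k$ in $(y,\eta)$ is reasonable but is not pursued in the paper either.
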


\subsection{Stationary phase asymptotics and the Van Vleck formula}\label{SS:SPandVVcon}
In this section, we prove the bound \eqref{spetaloc}, which as noted, concludes the proof of Theorem \ref{T:VVthm}.  Observe that by \eqref{Phifirstderivs} 
\begin{equation*}
	\Big\{ z=(q,p): p \in \supp(\Theta)  + B(0,\h^\rho),  \frac{\prtl \Phi}{\prtl z}(t,x,y,z) =0, \Im\Phi (t,x,y,z) =0 \Big\} = \tilde{\Xi}_t(x,y),
\end{equation*}
i.e. $\tilde{\Xi}_t(x,y)$ gives the critical set of the phase.

We begin by showing that with $\bar{Y}_t$ as the complex conjugate of the matrix in \eqref{YZdefs}, we have
\begin{equation}\label{PhiHessianDet}
	\det\Big(\frac 1i \frac{\prtl^2\Phi}{\prtl z^2}\Big|_{(q,p) = (y,\eta)} \Big) = \det(i\bar{Y}_t)\det(B_t).
\end{equation}
To see this, we factor the Hessian in \eqref{HessianHK} to see that
\begin{align*}
	\frac 1i \frac{\prtl^2\Phi}{\prtl z^2}\Big|_{(q,p) = (y,\eta)}
	&=
	\begin{bmatrix}
		(A_t+iC_t)^TA_t + I & (A_t+iC_t)^TB_t \\
		(B_t+iD_t)^TA_t-iI & (B_t+iD_t)^TB_t
	\end{bmatrix}
	\\
	&=   \begin{bmatrix}
		(A_t+iC_t)^T  & 0\\
		0 & (B_t+iD_t)^T
	\end{bmatrix}
	\begin{bmatrix}
		A_t+ (A_t+iC_t)^{-T} & I \\
		A_t-i(B_t+iD_t)^{-T} & I
	\end{bmatrix}
	\begin{bmatrix}
		I & 0\\
		0 & B_t
	\end{bmatrix},
\end{align*}
where the matrix functions $A_t(z),B_t(z),\dots$ are all evaluated at $z = (y,\eta)$ (recall from \eqref{siegellemma} that the middle matrix is well-defined).  First consider the middle matrix in the product here, since the matrices in the bottom row commute, its determinant is
computed analogously to a $2\times 2$ matrix, hence it simplifies to $\det(i(B_t+iD_t)^{-T}+(A_t+iC_t)^{-T})$.  The desired identity now follows from
\begin{align*}
	\det\Big(\frac 1i \frac{\prtl^2\Phi}{\prtl z^2}\Big|_{(q,p) = (y,\eta)} \Big) &= \det(A_t+iC_t)\det(B_t+iD_t)\det(i(B_t+iD_t)^{-T}+(A_t+iC_t)^{-T})\det(B_t)\\
	&= \det(i(A_t+iC_t)+(B_t+iD_t))\det(B_t) = \det(i\bar{Y}_t)\det(B_t).
\end{align*}

We now recall \cite[Lemma 3.8]{BilyRobert} showing that $\frac{\prtl^2 S}{\prtl x \prtl y}|_{(t,y,\eta)} = -(B_t(y,\eta))^{-1}$, and reiterate their argument here for convenience of the reader.  Since $\frac{\prtl q_t}{\prtl p}|_{z=(y,\eta)} $ is nonsingular, $x= q_t(y,p)$ determines $p$ as a function of $(x,y)$ near any $\eta \in \tilde{\Xi}_t(x,y)$.  We are now led to observe the two identities 
\begin{equation*}
	\frac{\prtl^2 }{\prtl x \prtl y}\big( S(t,y,p(x,y)) \big) = -	\frac{\prtl  p}{\prtl x} (x,y) = -B_t^{-1}.
\end{equation*}
The second identity just follows from the chain rule, so we show the first of these.  Recall the action $S(t,y,p)$ satisfies $\frac{\prtl S}{\prtl y} = A_t^T p_t - p$, $\frac{\prtl S}{\prtl p} = B_t^T p_t $.  Hence differentiating $S(t,y,p(x,y))$ in $y$ yields
\begin{equation*}
\frac{\prtl }{\prtl y}\big( S(t,y,p(x,y)) \big) = 	
\frac{\prtl S}{\prtl y} + \frac{\prtl S}{\prtl p}\frac{\prtl p}{\prtl x}  = 
\Big(A_t+B_t\frac{\prtl p}{\prtl y} \Big)^Tp_t -p = -p(x,y),
\end{equation*}
where the last identity follows since differentiating both sides of $q_t(y,p(x,y)) = x$ with respect to $y$ gives that $A_t+B_t\frac{\prtl p}{\prtl y} =0$.  Differentiating in $x$ then completes the proof.

At this point, Theorem \ref{T:locasymptotic} follows from an application of Theorem \ref{T:spthm}.  The only matter to address is the branches of the square roots of the factor $\det^{1/2}\bar{Y}_t$ in \eqref{Ldef} and the determinant of the Hessian in \eqref{PhiHessianDet}.  The former is defined by continuity in $t$ so that $\det^{1/2}\bar{Y}_0 = 2^{\frac d2} >0$ when $t=0$.  However, the branch of the square root of \eqref{PhiHessianDet} is determined by the stationary phase theorem, where the branch of $E\mapsto \det^{1/2}E$ is defined on the closure of the space of complex symmetric matrices $E$ with $\Re E$ positive definite, and taken so that $\det^{1/2}E>0$ when $E$ is real (cf. \cite[\S3.4]{HormanderI}).  Nonetheless, we have for some half-integer $\theta_\eta \in \frac 12 \mathbb{Z}$
\begin{equation*}
	\det\!^{1/2} \bar{Y}_t (i^\d \det \bar{Y}_t \det B_t)^{-1/2} = e^{-\frac{i\pi}{2}\theta_\eta }\Big|\det \frac{\prtl^2 S_\eta}{\prtl x \prtl y}\Big|^{1/2}.
\end{equation*}
\bibliographystyle{amsalpha}
\bibliography{bibtexdata}

\providecommand{\MR}[1]{}
\providecommand{\bysame}{\leavevmode\hbox to3em{\hrulefill}\thinspace}
\providecommand{\MR}{\relax\ifhmode\unskip\space\fi MR }
\providecommand{\MRhref}[2]{%
  \href{http://www.ams.org/mathscinet-getitem?mr=#1}{#2}
}
\providecommand{\href}[2]{#2}
\begin{thebibliography}{ABZ17}

\bibitem[ABZ17]{AlazardBurqZuilyStatPhase}
T.~Alazard, N.~Burq, and C.~Zuily, \emph{A stationary phase type estimate},
  Proc. Amer. Math. Soc. \textbf{145} (2017), no.~7, 2871--2880. \MR{3637937}

\bibitem[B{\'e}r77]{Berard77}
Pierre~H. B{\'e}rard, \emph{On the wave equation on a compact {R}iemannian
  manifold without conjugate points}, Math. Z. \textbf{155} (1977), no.~3,
  249--276. \MR{0455055}

\bibitem[Bon17]{BonthonneauTheta}
Yannick Bonthonneau, \emph{The {$\Theta$} function and the {W}eyl law on
  manifolds without conjugate points}, Doc. Math. \textbf{22} (2017),
  1275--1283. \MR{3690266}

\bibitem[BR01]{BilyRobert}
J.~M. Bily and D.~Robert, \emph{{The semi-classical {V}an {V}leck formula.
  {A}pplication to the {A}haronov-{B}ohm effect}}, {Long time behaviour of
  classical and quantum systems ({B}ologna, 1999)}, {Ser. Concr. Appl. Math.},
  vol.~1, World Sci. Publ., River Edge, NJ, 2001, pp.~89--106. \MR{1852218}

\bibitem[BR02]{BouzouinaRobert}
A.~Bouzouina and D.~Robert, \emph{Uniform semiclassical estimates for the
  propagation of quantum observables}, Duke Math. J. \textbf{111} (2002),
  no.~2, 223--252. \MR{1882134}

\bibitem[CG20]{canzani2020weyl}
Yaiza Canzani and Jeffrey Galkowski, \emph{Weyl remainders: an application of
  geodesic beams}, arXiv preprint arXiv:2010.03969 (2020).

\bibitem[Cha74]{ChazarainPoisson}
J.~Chazarain, \emph{Formule de {P}oisson pour les vari\'{e}t\'{e}s
  riemanniennes}, Invent. Math. \textbf{24} (1974), 65--82. \MR{343320}

\bibitem[CR97]{CombescureRobertWavePackets}
M.~Combescure and D.~Robert, \emph{{Semiclassical spreading of quantum wave
  packets and applications near unstable fixed points of the classical flow}},
  Asymptot. Anal. \textbf{14} (1997), no.~4, 377--404. \MR{1461126 (98g:81040)}

\bibitem[CR12]{CombescureRobertBook}
Monique Combescure and Didier Robert, \emph{{Coherent states and applications
  in mathematical physics}}, {Theoretical and Mathematical Physics}, Springer,
  Dordrecht, 2012. \MR{2952171}

\bibitem[DG75]{DuistermaatGuillemin}
J.~J. Duistermaat and V.~W. Guillemin, \emph{{The spectrum of positive elliptic
  operators and periodic bicharacteristics}}, Invent. Math. \textbf{29} (1975),
  no.~1, 39--79. \MR{0405514 (53 \#9307)}

\bibitem[Gre54]{GreenConjugatePoints}
L.~W. Green, \emph{{Surfaces without conjugate points}}, Trans. Amer. Math.
  Soc. \textbf{76} (1954), 529--546. \MR{0063097}

\bibitem[Gut71]{gutzwiller1971periodic}
Martin~C Gutzwiller, \emph{Periodic orbits and classical quantization
  conditions}, Journal of Mathematical Physics \textbf{12} (1971), no.~3,
  343--358.

\bibitem[HJ00]{HagedornJoye}
George~A. Hagedorn and Alain Joye, \emph{{Exponentially accurate semiclassical
  dynamics: propagation, localization, {E}hrenfest times, scattering, and more
  general states}}, Ann. Henri Poincar{\'e} \textbf{1} (2000), no.~5, 837--883.
  \MR{1806980 (2001k:81066)}

\bibitem[H{\"o}r90]{HormanderI}
Lars H{\"o}rmander, \emph{{The analysis of linear partial differential
  operators. {I}}}, second ed., {Grundlehren der Mathematischen Wissenschaften
  [Fundamental Principles of Mathematical Sciences]}, vol. 256,
  Springer-Verlag, Berlin, 1990, Distribution theory and Fourier analysis.
  \MR{1065993}

\bibitem[LS00]{LaptevSigal}
A.~Laptev and I.~M. Sigal, \emph{Global {F}ourier integral operators and
  semiclassical asymptotics}, Rev. Math. Phys. \textbf{12} (2000), no.~5,
  749--766. \MR{1767504}

\bibitem[Mei92]{MeinrenkenGutzwillerTrace}
Eckhard Meinrenken, \emph{Semiclassical principal symbols and {G}utzwiller's
  trace formula}, Rep. Math. Phys. \textbf{31} (1992), no.~3, 279--295.
  \MR{1232640}

\bibitem[MF81]{MaslovFedoriuk}
V.~P. Maslov and M.~V. Fedoriuk, \emph{Semiclassical approximation in quantum
  mechanics}, Mathematical Physics and Applied Mathematics, vol.~7, D. Reidel
  Publishing Co., Dordrecht-Boston, Mass., 1981, Translated from the Russian by
  J. Niederle and J. Tolar, Contemporary Mathematics, 5. \MR{634377}

\bibitem[OL20]{OhLeeUniform}
Sewook Oh and Sanghyuk Lee, \emph{Uniform stationary phase estimate with
  limited smoothness}, arXiv preprint arXiv:2012.12572 (2020).

\bibitem[Rob87]{RobertSemiClassique}
Didier Robert, \emph{Autour de l'approximation semi-classique}, Progress in
  Mathematics, vol.~68, Birkh\"{a}user Boston, Inc., Boston, MA, 1987.
  \MR{897108}

\bibitem[Rob10]{RobertHK}
\bysame, \emph{{On the {H}erman-{K}luk semiclassical approximation}}, Rev.
  Math. Phys. \textbf{22} (2010), no.~10, 1123--1145. \MR{2740707}

\bibitem[RS93]{RobbinSalamonPaths}
Joel Robbin and Dietmar Salamon, \emph{The {M}aslov index for paths}, Topology
  \textbf{32} (1993), no.~4, 827--844. \MR{1241874}

\bibitem[Rud76]{RudinPrinciples}
Walter Rudin, \emph{{Principles of mathematical analysis}}, third ed.,
  McGraw-Hill Book Co., New York-Auckland-D{\"u}sseldorf, 1976, International
  Series in Pure and Applied Mathematics. \MR{0385023}

\bibitem[SR09]{SwartRousse}
Torben Swart and Vidian Rousse, \emph{A mathematical justification for the
  {H}erman-{K}luk propagator}, Comm. Math. Phys. \textbf{286} (2009), no.~2,
  725--750. \MR{2472042}

\bibitem[SVT12]{SchubertValTos}
Roman Schubert, Ra{\'u}l~O. Vallejos, and Fabricio Toscano, \emph{{How do wave
  packets spread? {T}ime evolution on {E}hrenfest time scales}}, J. Phys. A
  \textbf{45} (2012), no.~21, 215307, 28. \MR{2925343}

\bibitem[Tac20]{TacyStatPhase}
M.~Tacy, \emph{Stationary phase type estimates for low symbol regularity},
  Anal. Math. \textbf{46} (2020), no.~3, 605--617. \MR{4137136}

\bibitem[Vol90]{Volovoy}
A.~V. Volovoy, \emph{Improved two-term asymptotics for the eigenvalue
  distribution function of an elliptic operator on a compact manifold}, Comm.
  Partial Differential Equations \textbf{15} (1990), no.~11, 1509--1563.
  \MR{1079602}

\bibitem[VV28]{VanVleckCorrespondence}
John~H. Van~Vleck, \emph{The correspondence principle in the statistical
  interpretation of quantum mechanics}, Proceedings of the National Academy of
  Sciences of the United States of America \textbf{14} (1928), no.~2, 178.

\bibitem[Zwo12]{ZworskiSemiclassicalAnalysis}
Maciej Zworski, \emph{{Semiclassical analysis}}, {Graduate Studies in
  Mathematics}, vol. 138, American Mathematical Society, Providence, RI, 2012.
  \MR{2952218}

\end{thebibliography}
\end{document}